\newcommand{\ut}{u_{\theta\theta\theta}}
\newcommand{\fns}{C_{\mathrm{per}}^{\infty}(\Omega)}
\newcommand{\mint}[1]{\int_{-\pi}^{\pi}\!#1 \,d\theta}
\newcommand{\term}{u_{\theta\theta}+u - \sin(\theta)}
\newcommand{\ft}[1]{\widehat{#1}}
\newcommand{\abs}[1]{\lvert#1\rvert}
\newcommand{\norm}[1]{\lVert#1\rVert}
\newcommand{\paren}[1]{\left(#1\right)}
\newcommand{\bracket}[1]{\left[#1\right]}
\newcommand{\set}[1]{\left\{#1\right\}}
\newcommand{\per}{\mathrm{per}}
\newcommand{\bigo}{\mathrm{O}}
\newcommand{\matlab}{\textsc{Matlab} }
\newcommand{\eps}{\epsilon}
\newcommand{\vareps}{\varepsilon}
\newcommand{\diag}{\mathrm{diag}}
\newcommand{\R}{\mathbb{R}}
\newcommand{\supp}{\mathrm{supp}\:}
\newtheorem{prop}{Proposition}
\newtheorem{lem}{Lemma}
\begin{document}

\title[Steady State Solutions of Thin Films]{Analytical and Numerical
  Results on the Positivity of Steady State Solutions of a Thin Film
  Equation}

\author[Ginsberg]{Daniel Ginsberg} \email{dan.ginsberg@utoronto.ca}
\address{Department of Mathematics\\University of Toronto\\Toronto,
  ON, M5S 2E4\\Canada}

\author[Simpson]{Gideon Simpson} \email{gsimpson@math.umn.edu}
\address{School of Mathematics\\University of Minnesota\\Minneapolis,
  MN, 55455\\USA}

\date{\today}

\keywords{Thin Films, Spectral Methods, Steady State Solutions}

\maketitle

\begin{abstract}

  We consider an equation for a thin-film of fluid on a rotating
  cylinder and present several new analytical and numerical results on
  steady state solutions.  First, we provide an elementary proof that
  both weak and classical steady states must be strictly positive so
  long as the speed of rotation is nonzero.  Next, we formulate an
  iterative spectral algorithm for computing these steady states.
  Finally, we explore a non-existence inequality for steady state
  solutions from the recent work of Chugunova, Pugh, \& Taranets.

\end{abstract}

\section{Introduction}

Thin liquid films appear in a wide range of natural and industrial
applications, such as coating processes, and are generically
characterized by a small ratio between the thickness of the fluid and
the characteristic length scale in the transverse direction.  The
physics of such processes are reviewed in Oron, Davis \&
Bankoff,\cite{Oron:1997p8421} and more recently in Craster \& Matar,
\cite{Craster:2009p8507}.  If one includes surface tension in the
model, the fluid is governed by a degenerate fourth order parabolic
equation.  Such equations, studied by Bernis \& Friedman,
\cite{Bernis1990}, Bertozzi \& Pugh, \cite{Bertozzi:1994p10686}, and
Beretta, Bertsch \& dal Passo \cite{Beretta:1995p13153} still hold
many challenges. See, for example,
\cite{Shishkov:2004p13213,Taranets:2006p13207, Chugunova:2010p6065}
for progress on models which include convection, and Becker \&
Gr\"un,\cite{Becker:2005p8459}, for a thorough review of analytical
progress.

There has been recent interest in equations governing the evolution of
a thin, viscous film on the outer (or inner) surface of a cylinder of
radius $R$ rotating with constant angular velocity $\omega$, as in
Figure \ref{fig:cyl}.  Subject to non-dimensionalization, this can be
modeled by
\begin{equation}
  u_t + (u^n(\ut + u_{\theta} - \sin\theta) + \omega u)_{\theta} = 0,
  \label{eq:tfe}
\end{equation}
Of particular interest is the case $n = 3$, in which one assumes there
is no slip at the fluid-cylinder interface.  The solution, $u$,
measures the (dimensionless) thickness of the fluid, parameterized by
the angular variable, $\theta\in \Omega \equiv [-\pi, \pi)$.  To
derive \eqref{eq:tfe}, one begins with the Navier-Stokes equations,
together with a kinematic boundary condition for the free surface
surface tension.  Subject to appropriate physical scalings and
geometric symmetries, we are left with a dimensionless equation with
just the $\omega$ parameter.  We refer the reader to, for instance,
\cite{Pukhnachev1977} and \cite{Pougatch:2011p14442} for a full
derivation.

\begin{figure}[h!]
  \begin{center}
    \leavevmode
    \includegraphics[scale=0.3]{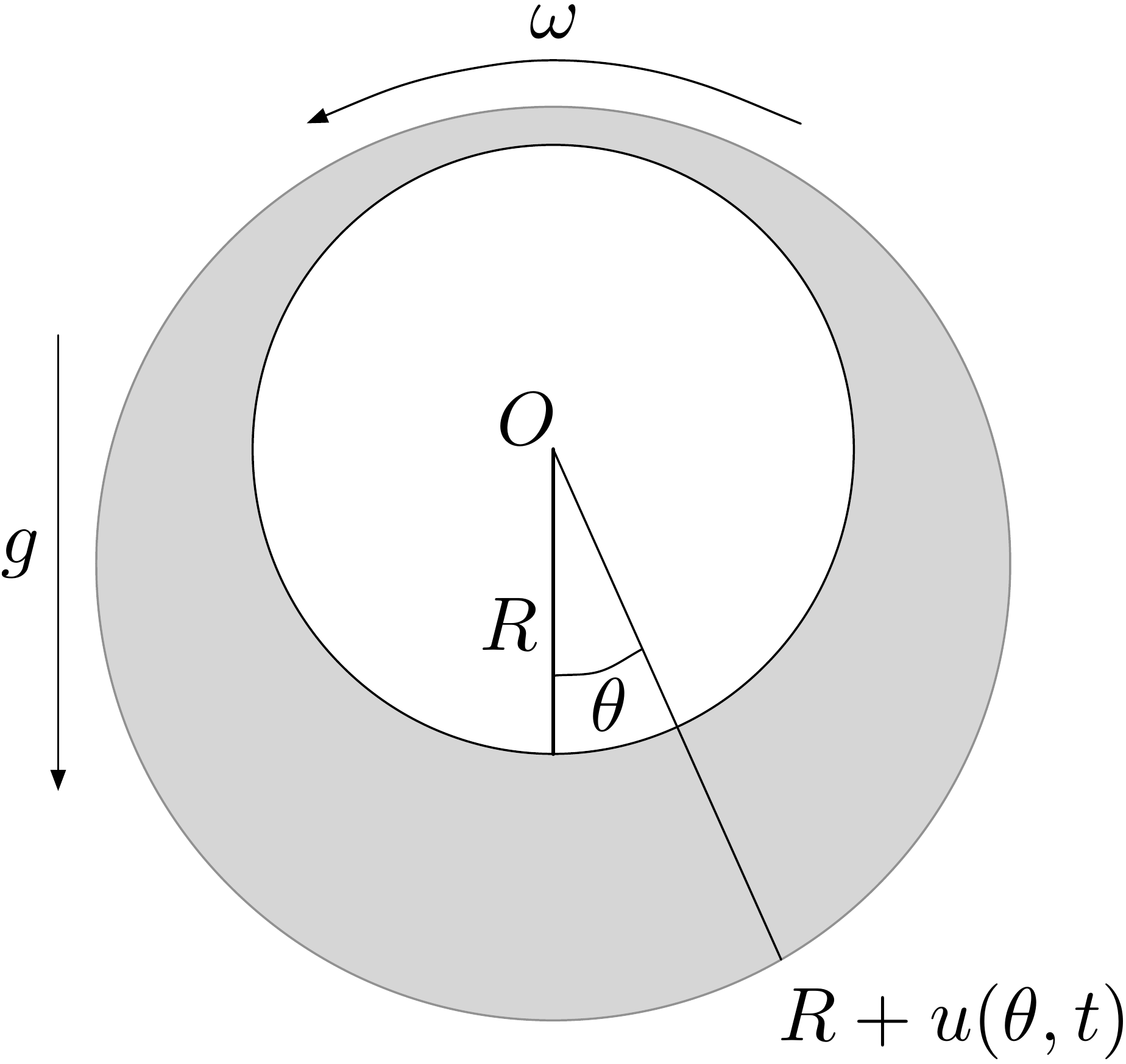}
  \end{center}
  \caption{ The grey region represents the thin film of liquid on the
  surface of the cylinder, which is rotating with constant angular velocity
  $\omega$. We assume symmetry in the $z$ coordinate, so that 
  $u$ is a function of $t$ and $\theta$ only.
}

\label{fig:cyl}
\end{figure}

Though Buchard, Chugunova, \& Stephens,\cite{Burchard:2010p8420}, gave
a detailed examination of \eqref{eq:tfe} when $\omega=0$, our
understanding of the case $\omega \neq 0$ remains incomplete. Some
partial results on this case have made assumptions that solutions of
\eqref{eq:tfe} are strictly positive, such as in
\cite{Pukhnachov:2005p5033}, where Pukhnachov proved, under a
smallness assumption, positive solutions of \eqref{eq:tfe} exist and
are unique.

In this work, we further study steady state solutions -- {\it
  non-negative}, {\it time independent} solutions $u(\theta)$ of
\eqref{eq:tfe}, satisfying
\begin{equation}
  (u^n(\ut + u_{\theta} - \sin\theta) + \omega u)_{\theta} = 0
  \label{eq:steady}
\end{equation}
This can be integrated once, to
\begin{equation}
  u^n(\ut +u_{\theta} -\sin\theta) + \omega u = q,
  \label{eq:steadyq}
\end{equation}
where $q$, the constant of integration, is the dimensionless {\it
  flux} of the fluid.  In particular, we give an elementary proof that
when $\omega \neq 0$, both weak and classical solutions must be
strictly positive; see Section \ref{s:positivity}.

For a more extended study of the steady states, it is helpful to
explore the problem numerically.  This can provide information on how,
and if, $u$ and its derivatives develop singularities as the rotation
parameter, $\omega$, goes to zero. For such a task, one needs a robust
algorithm for computing solutions of \eqref{eq:steadyq}.  In Section
\ref{s:algorithm}, we formulate and benchmark an iterative spectral
algorithm that is easy to implement and rapidly converges.

Our last result, appearing in Section \ref{s:non_exist}, is motivated
by Chugunova, Pugh, \& Taranets, \cite{Chugunova:2010p6065}, who
proved that for $n=3$ there can be no positive steady state solutions
when the rotation and flux parameters satisfy the inequality
\begin{equation}
  \label{e:nonexist}
  q > \paren{\tfrac{2}{3}}^{3/2} \omega^{3/2}.
\end{equation}
This was refinement of an earlier bound due to Pukhnachov,
\cite{pukhnachov2004asymptotic}.  We explore the $(\omega, q)$ phase
space and find evidence that this bound may be sharp.  Furthermore, it
appears that this curve is related to a transition between two
different regimes of steady state solutions.

\subsection{Notation} 
In what follows, we shall use the following notational conventions.
$C^{k}_{\per}(\Omega)$ is the set of continuous $2\pi$-periodic
functions on $\Omega = [-\pi, \pi)$ with $k$ continuous derivatives.
$H^k_{\per}(\Omega)$ is the set of square integrable $2\pi$-periodic
functions on $\Omega = [-\pi, \pi)$ with $k$ square integrable weak
derivatives.  For a function $\phi$ in one of these spaces, we shall
denote its mean by $\bar{\phi}$.  For brevity, we will also sometimes
write
\[
a \lesssim b
\]
to indicate that there exists a constant $C>0$ such that
\[
a \leq C b.
\]

\section{Positivity of Steady States}
\label{s:positivity}
	
Here, we prove that both classical and weak steady state solutions of
\eqref{eq:tfe} are strictly positive provided $\omega \neq 0$.
Buchard, Chugunova, \& Stephens, \cite{Burchard:2010p8616},
demonstrated that for $\omega = 0$, such solutions need not be
positive.

\subsection{Classical Steady State Solutions}

As noted, the case $n = 3$ corresponds to a no-slip condition at the
surface of the cylinder.  With this in mind, it is physically obvious
that, as long as $\omega \neq 0$, any steady state solution of
\eqref{eq:tfe} with $n = 3$ will coat the entire cylinder -- that is,
there will be no ``dry patches'' on the surface of the cylinder. The
following elementary argument shows that this physical intuition is
correct, and is in fact valid for a broad range of exponents $n$.

\begin{prop}[Positivity of Classical Steady State Solutions]
  \label{p:classical}
  Let $u\in C^4_{\per}(\Omega)$ be a classical steady-state solution
  of \eqref{eq:tfe} with $n>1$.  If $\supp(u)$ $\neq \Omega$, then
  $\omega = 0$.

\end{prop}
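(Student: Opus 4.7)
The plan is to exploit the once-integrated equation \eqref{eq:steadyq} together with the elementary fact that a nonnegative $C^1$ function has vanishing first derivative at every zero. Since $\supp(u) \neq \Omega$ and $u \geq 0$, there is some $\theta_0 \in \Omega$ with $u(\theta_0) = 0$; by minimality, $u_\theta(\theta_0) = 0$ as well, and since $n > 1$, the $u^n$ factor in \eqref{eq:steadyq} vanishes at $\theta_0$. Evaluating \eqref{eq:steadyq} at $\theta_0$ then immediately yields $q = 0$.

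With $q$ eliminated, I would next work on the open set $\{u > 0\}$, where we may divide by $u$ to obtain
$$u^{n-1}\paren{u_{\theta\theta\theta} + u_\theta - \sin\theta} = -\omega.$$
Provided $u \not\equiv 0$ (which we may assume, as otherwise the conclusion is vacuous), $\supp(u)$ has nonempty boundary. Pick a boundary point $\theta_\star$ and a sequence $\theta_k \to \theta_\star$ with $\theta_k \in \{u > 0\}$. Along this sequence $u(\theta_k) \to 0$, so $u^{n-1}(\theta_k) \to 0$ (this is the second use of $n > 1$), while $u_{\theta\theta\theta} + u_\theta - \sin\theta$ stays uniformly bounded on $\Omega$ by $C^4_{\per}$ regularity and compactness. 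The left-hand side therefore tends to $0$, forcing $\omega = 0$.

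There is no real obstacle once this structure is in place; the proof is essentially a one-line limit. The only subtle point is recognising that the appropriate place to take the limit is the boundary of $\supp(u)$ (approached from inside the support), and that the $C^4_{\per}$ hypothesis does all the heavy lifting by making the bracketed factor bounded, regardless of how complicated the geometry of the zero set is. No finer regularity analysis of the zero set (such as higher-order vanishing of $u$ at $\partial \supp(u)$) is required, which is what makes the argument qualify as ``elementary.''
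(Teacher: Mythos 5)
Your proposal is correct and follows essentially the same route as the paper: evaluate the once-integrated equation \eqref{eq:steadyq} at a zero of $u$ to conclude $q=0$, divide by $u$ on $\{u>0\}$, and let $\theta$ approach a zero on the boundary of the support, where $u^{n-1}\to 0$ (using $n>1$) while $u_{\theta\theta\theta}+u_\theta-\sin\theta$ stays bounded, forcing $\omega=0$. Your version is in fact slightly more careful than the paper's in spelling out that the limiting zero must be approached from within $\supp(u)$ and in setting aside the trivial solution $u\equiv 0$.
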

\begin{proof}
  Assume that $u$ vanishes at $\theta_0$.  Evaluating
  \eqref{eq:steadyq} at $\theta_0$, we have $q=0$. Consequently,
  \begin{equation}
    u^n(\ut+u_{\theta}-\sin\theta) + \omega u = 0.
    \label{eq:0flux}
  \end{equation}
  On the support of $u$,
  \begin{equation}
    u^{n-1}(\ut  +u_{\theta}  -\sin\theta)+\omega = 0.
    \label{eq:divide}
  \end{equation}
  Since $\ut$ is bounded as we send $\theta$ towards the root, $\omega
  =0$.
\end{proof}
Though a classical solution of \eqref{eq:tfe} needs to be $C^4$, the
above proof succeeds without change if $u$ is merely $C^2$ with
bounded third derivative.

\subsection{Weak Steady State Solutions}
Positivity can also be proven under weaker assumptions on $u$. We call
a non-negative function $u\in H^2_\per(\Omega)$ a {\it weak solution}
of\eqref{eq:steady}, if for all $\phi \in \fns$,
\begin{equation}
  \mint{ (u^n\phi_{\theta})_{\theta}(\term) - \omega u\phi_{\theta}} = 0.
  % \tag{\ref{eq:steady}$^w$}
  \label{eq:1w}
\end{equation}

We first establish some pointwise properties of a solution $u$, in the
neighborhood of a hypothetical touchdown point, $\theta_0$.
\begin{lem}
  \label{lem:pointwise}
  Let $u\in H^2_\per(\Omega)$ be non-negative.  If there exists
  $\theta_0$ such that $u(\theta_0)=0$, then
  \begin{subequations}
    \label{e:pointwise_est}
    \begin{align}
      u'(\theta_0) = 0\\
      u(\theta) = \abs{u(\theta)- u(\theta_0)} &\lesssim
      \abs{\theta-\theta_0}^{3/2},\\
      \abs{u'(\theta)} = \abs{u'(\theta)- u'(\theta_0)} &\lesssim
      \abs{\theta-\theta_0}^{1/2}.
    \end{align}
  \end{subequations}
\end{lem}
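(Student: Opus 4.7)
The plan is to exploit Sobolev embedding and the fundamental theorem of calculus. In one dimension, $H^2_\per(\Omega) \hookrightarrow C^1(\Omega)$, so any $u$ satisfying the hypotheses is continuously differentiable, with $u''$ in $L^2$. This immediately puts pointwise arguments at our disposal.

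First I would establish that $u'(\theta_0)=0$. Since $u\ge 0$ and $u(\theta_0)=0$, the point $\theta_0$ is a global minimum of $u$. Because $u \in C^1$ by the embedding above, elementary calculus gives $u'(\theta_0) = 0$.

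Next I would derive the $1/2$-Hölder estimate for $u'$ by writing
\[
u'(\theta) - u'(\theta_0) = \int_{\theta_0}^{\theta} u''(s)\, ds
\]
and applying the Cauchy--Schwarz inequality to obtain
\[
\abs{u'(\theta) - u'(\theta_0)} \le \abs{\theta-\theta_0}^{1/2} \norm{u''}_{L^2}.
\]
This is the second bound. Then I would iterate: using $u'(\theta_0)=0$,
\[
u(\theta) - u(\theta_0) = \int_{\theta_0}^{\theta} \paren{u'(s) - u'(\theta_0)}\, ds,
\]
so integrating the previous estimate yields
\[
\abs{u(\theta) - u(\theta_0)} \lesssim \int_{\theta_0}^{\theta} \abs{s-\theta_0}^{1/2}\, ds \lesssim \abs{\theta-\theta_0}^{3/2},
\]
which is the first bound. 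The equalities on the left-hand sides of the two Hölder estimates use $u(\theta_0)=0$ and $u'(\theta_0)=0$ respectively.

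The argument is largely routine once the Sobolev embedding is in hand; the only thing that requires genuine care is the justification that $u'(\theta_0)=0$, which could fail if one only had $u \in H^2$ without the $C^1$ regularity. Since we are in one spatial dimension this is free, and the rest reduces to Cauchy--Schwarz on the $L^2$ control of $u''$.
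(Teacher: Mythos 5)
Your proof is correct and follows essentially the same route as the paper: the one-dimensional Sobolev embedding gives $C^1$ regularity with $1/2$-H\"older control of $u'$ by $\norm{u''}_{L^2}$, the non-negativity forces $u'(\theta_0)=0$ at the touchdown point, and this is then upgraded to the $3/2$-power bound on $u$. The only cosmetic differences are that you derive the H\"older estimate by hand via the fundamental theorem of calculus and Cauchy--Schwarz rather than citing the embedding $H^2\hookrightarrow C^{1,1/2}$, and you integrate the estimate on $u'$ where the paper invokes the mean value theorem.
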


\begin{proof}
  By virtue of a Sobolev embedding theorem in dimension one,
  $H^2\hookrightarrow C^{1,1/2}$,
  \cite{adams1975sobolev,Evans:1998fk}.  Therefore, both $u$ and $u'$
  are H\"older continuous with exponent $1/2$;
  \begin{subequations}
    \label{e:pointwise_est1}
    \begin{align}
      \abs{u(\theta)- u(\theta_0)} &\lesssim
      \abs{\theta - \theta_0}^{1/2}\\
      \abs{u'(\theta)- u'(\theta_0)} &\lesssim \abs{\theta -
        \theta_0}^{1/2}.
    \end{align}
  \end{subequations}
  Because $u$ is $C^1$ {\it and} non-negative, we must have that
  $u'(\theta_0) = 0$ too, otherwise there would be a zero crossing.

  Refining \eqref{e:pointwise_est1}, we can apply the mean value
  theorem and the H\"older continuity of $u'$ to get
  \[
  \abs{u(\theta)- u(\theta_0)} =
  \abs{u'(\theta_\star)}\abs{\theta-\theta_0} =\abs{u'(\theta_\star) -
    u'(\theta_0)}\abs{\theta-\theta_0}\lesssim
  \abs{\theta-\theta_0}^{3/2},
  \]
  where $\theta_\star$ lies between $\theta$ and $\theta$.
\end{proof}

To prove the weak form of Proposition \ref{p:classical}, we begin by
deriving a weak analog of \eqref{eq:steadyq}:
\begin{lem}
  \label{l:weakflux}
  % LEMMA
  % 1%----------------------------------------
  If $u$ is a weak solution in the sense of \eqref{eq:1w}, then there
  is a constant $q$, depending only on $u$, such that
  \begin{equation}
    \mint{(u^n\phi)_{\theta}(\term)-\omega u \phi} = q\mint{\phi} 
    % \tag{\ref{eq:steadyq}$^w$}
    \label{eq:2w}
  \end{equation}
  for all $\phi\in\fns$.
\end{lem}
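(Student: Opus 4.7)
The plan is to read \eqref{eq:1w} as saying that the linear functional
\[
T(\phi) := \mint{(u^n\phi)_\theta(\term) - \omega u \phi}, \qquad \phi \in \fns,
\]
vanishes whenever the test function is a derivative of a periodic function, i.e., whenever $\bar{\phi} = 0$, and then to conclude that $T$ must be a constant multiple of $\phi \mapsto \int \phi \,d\theta$.

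First I would check that $T$ is well-defined on $\fns$. Since $u \in H^2_{\per} \hookrightarrow C^1_{\per}$, the product $(u^n \phi)_\theta = n u^{n-1} u_\theta \phi + u^n \phi_\theta$ is continuous and bounded, and pairs via Cauchy--Schwarz against the $L^2$ function $u_{\theta\theta} + u - \sin\theta$; the term $\omega u \phi$ is trivially integrable. Thus $T$ is a genuine linear functional on $\fns$.

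Next I would show that $T(\phi) = 0$ for every $\phi \in \fns$ with $\bar{\phi} = 0$. Given such a $\phi$, set
\[
\psi(\theta) := \int_{-\pi}^{\theta} \phi(s)\, ds.
\]
This is smooth, and the mean-zero condition on $\phi$ gives $\psi(\pi) = \psi(-\pi) = 0$, so $\psi \in \fns$. Since $\psi_\theta = \phi$, substituting $\psi$ for the test function in \eqref{eq:1w} yields exactly $T(\phi) = 0$. Note that this step is the whole content of the lemma: the only reason we do not already have \eqref{eq:2w} for all $\phi$ is that $\psi$ is required to be $2\pi$-periodic, which is exactly the constraint $\bar{\phi}=0$.

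Finally, for an arbitrary $\phi \in \fns$ I would decompose $\phi = \bar{\phi} + (\phi - \bar{\phi})$ and use linearity of $T$, together with $T(\phi - \bar\phi) = 0$, to obtain $T(\phi) = \bar{\phi}\, T(1)$. Setting $q := T(1)/(2\pi)$ and using $\int_{-\pi}^{\pi} \phi\, d\theta = 2\pi \bar{\phi}$ delivers \eqref{eq:2w}. There is no real obstacle here; the main point to be careful about is verifying in the second step that the antiderivative $\psi$ actually lies in $\fns$, which is exactly what the mean-zero condition buys.
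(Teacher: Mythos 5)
Your proof is correct and follows essentially the same route as the paper: substitute the periodic antiderivative of $\phi - \bar{\phi}$ into \eqref{eq:1w} and read off the constant from the mean part of the test function. The only difference is presentational (phrasing things via the functional $T$), and your normalization $q = T(1)/(2\pi)$ is in fact the right constant.
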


\begin{proof}
  First, observe that for any $\phi \in\fns$, $\phi - \bar{\phi}$ has
  mean zero and therefore has an antiderivative in $\fns$. Given a
  test function $\phi$, let $\varphi$ solve $\varphi_{\theta} =
  \phi-\bar{\phi}$.  Plugging in $\varphi$ in \eqref{eq:1w} in place
  of $\phi$, we have
  \begin{equation}
    \mint{(u^n\phi)_{\theta}(\term) -\omega u \phi} =   \bp \mint{(u^n)_{\theta}(\term) - \omega u}.
  \end{equation}
  We may now take
  \[
  q = 2\pi \mint{(u^n)_{\theta}(\term) - \omega u}.
  \]

\end{proof}

We next derive a weak analog of equation \eqref{eq:0flux} in the event
that $u$ has a root.
\begin{lem}
  \label{l:weakzeroflux}
  % LEMMA
  % 2%-----------------------------------------------
  For $n>1$, let $u$ be a weak solution satisfying \eqref{eq:2w} for
  all $\phi\in\fns$. If $u(\theta_0) = 0$ for some
  $\theta_0\in\Omega$, then $q = 0$.
\end{lem}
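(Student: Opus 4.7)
The plan is to localise the weak flux identity \eqref{eq:2w} around the hypothetical touchdown point $\theta_0$ by plugging in a family of approximate identities, and then to use the pointwise bounds from Lemma \ref{lem:pointwise} together with the $L^2$ bound on $u_{\theta\theta}$ to show that the left-hand side vanishes in the limit, while the right-hand side stays equal to $q$.

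Concretely, I would fix a smooth bump $\eta \in \fns$ (or rather, extended periodically) supported in $[-1,1]$ with $\int \eta = 1$, and set $\phi_\eps(\theta) = \eps^{-1} \eta((\theta-\theta_0)/\eps)$ for $\eps$ small. Then $\int \phi_\eps \, d\theta = 1$, $\supp \phi_\eps \subset [\theta_0-\eps, \theta_0+\eps]$, $\|\phi_\eps\|_{L^2} \lesssim \eps^{-1/2}$, and $\|\phi_\eps'\|_{L^2} \lesssim \eps^{-3/2}$. Substituting $\phi_\eps$ into \eqref{eq:2w}, the right-hand side equals $q$ identically, so it suffices to show the left-hand side tends to zero as $\eps \to 0$.

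By Lemma \ref{lem:pointwise}, on $\supp \phi_\eps$ we have $u(\theta) \lesssim \eps^{3/2}$ and $|u'(\theta)| \lesssim \eps^{1/2}$. Hence $|\omega u \phi_\eps| \lesssim \omega \eps^{3/2} \phi_\eps$, which integrates to $O(\eps^{3/2})$. Expanding $(u^n \phi_\eps)_\theta = n u^{n-1} u' \phi_\eps + u^n \phi_\eps'$ and computing the relevant norms,
\[
\|n u^{n-1} u' \phi_\eps\|_{L^2} \lesssim \eps^{(3n-2)/2} \cdot \eps^{-1/2} = \eps^{(3n-3)/2}, \qquad \|u^n \phi_\eps'\|_{L^2} \lesssim \eps^{3n/2} \cdot \eps^{-3/2} = \eps^{(3n-3)/2},
\]
and similarly $\|(u^n \phi_\eps)_\theta\|_{L^1} \lesssim \eps^{(3n-2)/2}$ after accounting for the support length. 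The term $u-\sin\theta$ is bounded in $L^\infty$, so it pairs with the $L^1$ estimate to give a contribution of order $\eps^{(3n-2)/2}$. For the $u_{\theta\theta}$ piece of $\term$, I would use Cauchy--Schwarz:
\[
\left| \mint{(u^n \phi_\eps)_\theta \, u_{\theta\theta}} \right| \leq \|(u^n \phi_\eps)_\theta\|_{L^2} \|u_{\theta\theta}\|_{L^2} \lesssim \eps^{(3n-3)/2} \|u\|_{H^2}.
\]
Since $n > 1$, both exponents $(3n-2)/2$ and $(3n-3)/2$ are strictly positive, so every term on the left-hand side tends to $0$ as $\eps \downarrow 0$, forcing $q = 0$.

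The main obstacle is that $u_{\theta\theta}$ is only an $L^2$ function and cannot be controlled pointwise near $\theta_0$; this is why Cauchy--Schwarz is essential and why the $L^2$ norms of $\phi_\eps$ and $\phi_\eps'$ must be computed carefully. The precise powers of $\eps$ coming from Lemma \ref{lem:pointwise} are just strong enough to beat the $\eps^{-1/2}$ and $\eps^{-3/2}$ growth in $\|\phi_\eps\|_{L^2}$ and $\|\phi_\eps'\|_{L^2}$ provided $n>1$; any weaker assumption on $n$ or on the regularity of $u$ would fail at exactly this step.
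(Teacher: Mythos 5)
Your proposal is correct and follows essentially the same route as the paper: rescale a unit-mass bump at the touchdown point, use the H\"older estimates of Lemma \ref{lem:pointwise} to bound $u$, $u'$ (and hence $(u^n\phi_\eps)_\theta$) on the shrinking interval, handle $u_{\theta\theta}$ via Cauchy--Schwarz against its global $L^2$ norm, and conclude $\abs{q}\lesssim \eps^{(3n-3)/2}\to 0$ for $n>1$. The only cosmetic difference is that the paper bounds $\norm{u_{\theta\theta}+u-\sin\theta}_{L^2(I_\eps)}$ as a whole while you split off the bounded part $u-\sin\theta$; the resulting rates are the same.
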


\begin{proof}

  Given $\varepsilon > 0$, let $\phi \in \fns$ be a compactly
  supported test function satisfying:
  \begin{equation}
    \label{e:bump_func}
    0 \leq \phi \leq 1, \quad \supp\phi \subseteq (-1, +1), \quad \mint{\phi} = 1.
  \end{equation}
  Let
  \begin{equation}
    \label{e:bump_rescaled}
    \phi^\vareps(\theta) \equiv \vareps^{-1} \phi((\theta-\theta_0)/\vareps)
  \end{equation}
  Then $\supp\phi^\vareps \subseteq I_\varepsilon \equiv
  [\theta_0-\varepsilon, \theta_0+\varepsilon]$.  Substituting
  $\phi^\vareps$ into \eqref{eq:2w}, we compute
  \begin{equation}
    \label{e:q_bound}
    \begin{split}
      \abs{q} &\leq \abs{\omega}\int_{I_\varepsilon} u \phi^\vareps d
      \theta + \int_{I_\varepsilon} \abs{n u^{n-1} u_\theta
        \phi^\vareps + u^n
        \phi_\theta}\abs{u_{\theta\theta} + u - \sin\theta}d \theta\\
      &\leq \abs{\omega}\norm{u}_{L^\infty(I_\varepsilon)}+ \norm{n
        u^{n-1} u_\theta \phi^\vareps + u^n
        \phi^\vareps_\theta}_{L^2(I_\varepsilon)}
      \norm{u_{\theta\theta}
        + u - \sin\theta}_{L^2(I_\varepsilon)} \\
      &\leq \abs{\omega}\norm{u}_{L^\infty(I_\varepsilon)} +
      C\sqrt{2\vareps}\paren{\norm{n u^{n-1} u_\theta \phi^\vareps
        }_{L^\infty(I_\varepsilon)} + \norm{u^n
          \phi^\vareps_\theta}_{L^\infty(I_\varepsilon)}}
    \end{split}
  \end{equation}
  The constant $C$ depends on the $L^2$ norms of $u$ and its second
  derivative, which are both finite by assumption.
  
  Using estimates \eqref{e:pointwise_est}, we have that within
  $I_\varepsilon$,
  \begin{subequations}
    \label{e:holder_est}
    \begin{align}
      u^n\phi_{\theta}^\vareps &\lesssim \varepsilon^{\frac{3n}{2}-2}\\
      nu^{n-1}u_{\theta}\phi^\vareps &\lesssim
      \varepsilon^{\frac{3n}{2}-2}
    \end{align}
  \end{subequations}
  Substituting this pointwise analysis into \eqref{e:q_bound},
  \begin{equation}
    \abs{q} \lesssim \varepsilon^{\frac{3}{2}} + \varepsilon^{(3n-3)/2}.
  \end{equation}
  Since this holds for all $\varepsilon>0$, we conclude that $q=0$ for
  $n>1$.

\end{proof}

Using Lemmas \ref{l:weakflux} and \ref{l:weakzeroflux}, we now know
that a weak solution with a touchdown satisfies
\begin{equation}
  \mint{(\phi u^n)_{\theta}(\term)-\omega u \phi} = 0.
  % \tag{\ref{eq:0flux}$^w$}
  \label{eq:3w}
\end{equation}
Contrast this with its classical counterpart, equation
\eqref{eq:0flux}.

We now proceed to our main result in the weak case, under the
additional restriction that $n \geq 2$:
% _________Weak Positivity______________
\begin{prop}[Positivity of Weak Solutions]
  Let $u$ be a weak solution satisfying \eqref{eq:1w} with $n \geq 2$
  for all $\varphi\in\fns$. If $S = \{\theta : u(\theta) > 0\} \neq
  \Omega$, then $\omega = 0$.
\end{prop}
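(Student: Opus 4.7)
The plan is to apply the no-flux weak equation \eqref{eq:3w}, available once Lemmas \ref{l:weakflux} and \ref{l:weakzeroflux} give $q=0$, to a test function designed to isolate $\omega$. The key idea: if we can legitimately test against $\phi = \eta/u$ for a bump $\eta$ supported just inside a component of $S = \{u > 0\}$ near a touchdown point, the factors of $u$ collapse via $u^n\phi = u^{n-1}\eta$ and $u\phi = \eta$, so \eqref{eq:3w} reduces to a single integral equaling $\omega$.

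To set this up, I first extend the class of admissible test functions in \eqref{eq:3w} from $\fns$ to $H^2_{\per}(\Omega)$ by density; this is justified because $u, u_\theta \in L^\infty$ via $H^2 \hookrightarrow C^{1,1/2}$, so both sides of \eqref{eq:3w} are continuous in $\phi \in H^2_{\per}(\Omega)$. Next, pick a touchdown $\theta_0 \in \partial S$ with an open arc $J = (\theta_0, \theta_1) \subset S$ lying to its right. For small $\delta > 0$, let $\eta_\delta \in C^\infty_c((\theta_0 + \delta, \theta_0 + 2\delta))$ satisfy $\int \eta_\delta\, d\theta = 1$, $\|\eta_\delta\|_{L^\infty} \lesssim \delta^{-1}$, and $\|(\eta_\delta)_\theta\|_{L^\infty} \lesssim \delta^{-2}$. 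Since $u$ is continuous and strictly positive on $\supp \eta_\delta \subset J$, the function $\phi_\delta := \eta_\delta/u$, extended by zero, lies in $H^2_{\per}(\Omega)$. Substituting it into the extended form of \eqref{eq:3w} produces
\[
\int_{-\pi}^{\pi} (u^{n-1}\eta_\delta)_\theta\,(u_{\theta\theta} + u - \sin\theta)\, d\theta = \omega.
\]

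The left side is then estimated by Cauchy-Schwarz. Expanding $(u^{n-1}\eta_\delta)_\theta = (n-1)u^{n-2}u_\theta\eta_\delta + u^{n-1}(\eta_\delta)_\theta$ and using the pointwise bounds $u \lesssim \delta^{3/2}$ and $|u_\theta| \lesssim \delta^{1/2}$ on $\supp \eta_\delta$ from Lemma \ref{lem:pointwise}, both terms are $O(\delta^{(3n-7)/2})$ in $L^\infty(\supp\eta_\delta)$, giving $\|(u^{n-1}\eta_\delta)_\theta\|_{L^2(\supp \eta_\delta)} \lesssim \delta^{3(n-2)/2}$, which remains bounded as $\delta \to 0$ when $n \geq 2$. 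Meanwhile $\|u_{\theta\theta} + u - \sin\theta\|_{L^2(\supp\eta_\delta)} \to 0$ by absolute continuity of the Lebesgue integral on shrinking sets. Hence $|\omega| \to 0$, forcing $\omega = 0$.

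The main technical obstacle is the admissibility of $\phi_\delta = \eta_\delta/u$: this requires both the density extension to $H^2$ test functions and the observation that $\eta_\delta/u \in H^2_{\per}(\Omega)$, which follows from $u \in H^2$ and the positivity of $u$ on $\supp\eta_\delta$ (so that $(1/u)_{\theta\theta}$ is in $L^2$ locally). The hypothesis $n \geq 2$ enters precisely when bounding $u^{n-2} \lesssim \delta^{3(n-2)/2}$; for $n < 2$ this would require a lower bound on $u$ near $\theta_0$, which the pointwise estimates of Lemma \ref{lem:pointwise} do not supply.
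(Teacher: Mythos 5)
Your proposal is correct and takes essentially the same route as the paper: after Lemmas \ref{l:weakflux} and \ref{l:weakzeroflux} give \eqref{eq:3w}, one tests against a rescaled bump supported in $S$ at distance $\sim\varepsilon$ from the touchdown point, divided by a power of $u$, applies Cauchy--Schwarz together with the pointwise bounds of Lemma \ref{lem:pointwise}, and arrives at exactly the paper's bound $\abs{\omega}\lesssim \varepsilon^{\frac{3}{2}(n-2)}\norm{u_{\theta\theta}+u-\sin\theta}_{L^2}$ over a shrinking interval. The only differences are cosmetic: you divide by $u$ instead of $u^n$ (which merely moves the degenerate factors from the $\omega$-term to the elliptic term), you treat $n\geq 2$ uniformly via absolute continuity rather than splitting $n>2$ from $n=2$, and you make explicit the density argument admitting $H^2$ test functions that the paper leaves implicit.
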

\begin{proof} The proof is by contradiction.  Assuming that $S \neq
  \Omega$, let $\theta_0$ be a point on the boundary of $S$.  Given
  $\varepsilon > 0$ sufficiently small, take $y\in \Omega$ such that
  the interval $I_{\varepsilon/2} \equiv
  [y-\tfrac{\varepsilon}{2},y+\tfrac{\varepsilon}{2}]$ is contained in
  $S$ and $|\theta_0-y| =\varepsilon$.  Now, let $\psi$ be a compactly
  supported test function as in \eqref{e:bump_func}, and let
  \[
  \psi^\vareps(\theta) = 2\vareps^{-1}\psi(2(\theta - y)/\vareps).
  \]
  This function then has support in $I_{\varepsilon/2}$ as in Figure
  \ref{f:bump}.  Finally, define $\phi^\vareps = \psi^\vareps/u^n$,
  which is as smooth as $u$, by the conditions on $\varepsilon$ and
  $y$.

  Plugging $\phi^{\varepsilon}$ into \eqref{eq:3w}, we see:
  \begin{equation}
    \begin{split}
      \abs{\omega} \int_{I_{\varepsilon/2}} {u}^{1-n}{\psi^\vareps}
      d\theta &\leq \int_{I_{\varepsilon/2}}\abs{\psi_\theta^\vareps}
      \abs{u_{\theta\theta} + u -
        \cos\theta} d\theta\\
      &\leq\norm{\psi_\theta^\vareps}_{L^2(I_{\varepsilon/2})}
      \norm{u_{\theta\theta} + u -
        \cos\theta}_{L^2(I_{\varepsilon/2})}.
    \end{split}
  \end{equation}
  Since ${u}\lesssim \varepsilon^{\frac{3}{2}}$ on the interval and
  $n>1$, we can bound the integral on the left from below:
  \begin{equation}
    \int_{I_{\varepsilon/2}} {u}^{1-n}{\psi^\vareps} d\theta  \gtrsim  \varepsilon^{\frac{3}{2}(1-n)}.
  \end{equation}
  By the construction of $\psi^\vareps$,
  $\norm{\psi_\theta^\vareps}_{L^2(I_{\varepsilon/2})} \lesssim
  \varepsilon^{-3/2}$.  Therefore,
  \begin{equation}
    \abs{\omega} \lesssim  \varepsilon^{\frac{3}{2}(n-2)}\norm{u_{\theta\theta} + u -
      \cos\theta}_{L^2(I_{\varepsilon/2})}
  \end{equation}
  For $n>2$, we immediately observe that since $\varepsilon>0$ is
  arbitrary, $\omega = 0$.  For $n=2$, we know that since
  $(u_{\theta\theta})^2$ and $u^2$ are measurable,
  \[
  \norm{u_{\theta\theta} + u - \sin\theta}_{L^2(I_{\varepsilon/2})}
  \to 0
  \]
  as $\varepsilon \to 0$, which implies $\omega = 0$ in this case.

  \begin{figure}[h!]
    \begin{center}
      \leavevmode
      \includegraphics[width=3in]{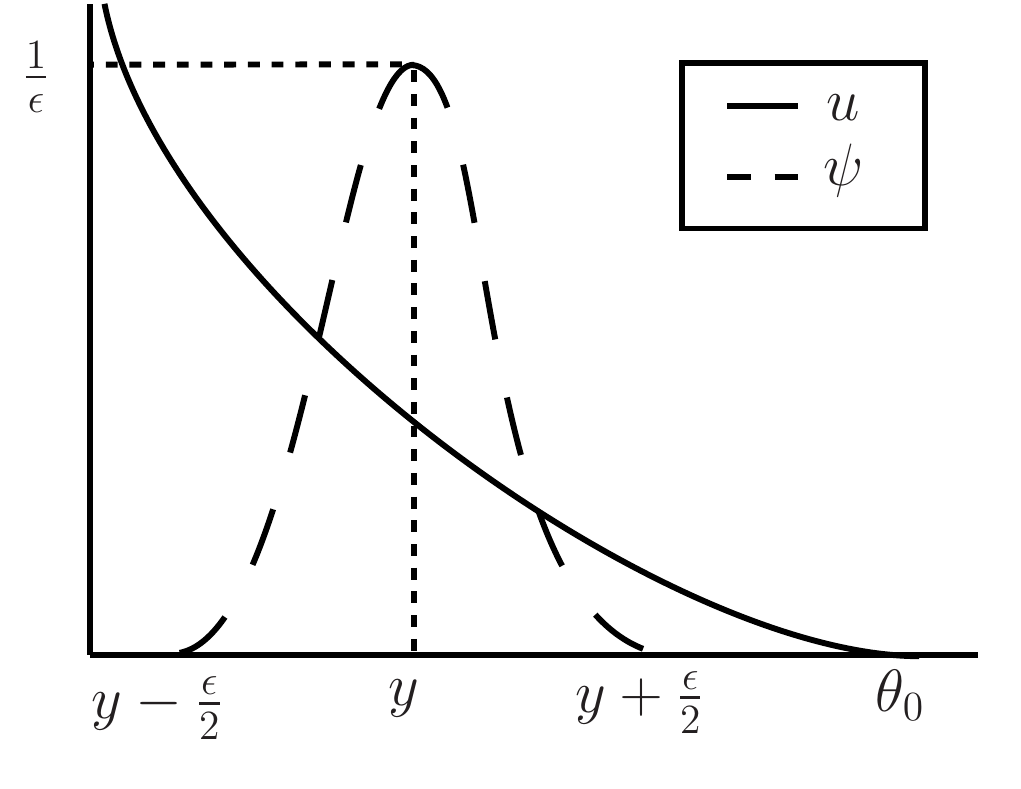}
    \end{center}
    \caption{In the proof of proposition 2, we choose a smooth,
      compactly supported function $\psi$ with support contained in
      the support of $u$, but close to a zero of $u$.}
    \label{f:bump}
  \end{figure}

\end{proof}

\section{A Computational Method for Steady State Solutions }
\label{s:algorithm}

While we have given a simple proof that when $\omega \neq 0$,steady
state solutions must be positive, we have not established their
existence.  For that, we rely on \cite{Pukhnachov:2005p5033}, where it
was shown that in the case $n=3$ ``small solutions'' existed and were
unique.  Let us motivate this with some heuristic asymptotics.  First,
assume that $0 < q \ll \omega$, {\it i.e.}
\begin{equation}
  \label{e:puk_regime}
  q = \eps \omega, \quad 0<\eps \ll 1.
\end{equation}
Subject to this assumption, we may then perform a series expansion in
$\eps$,
\begin{equation}
  u = u^{(0)} + \eps u^{(1)} + \eps^2 u^{(2)} + \eps^3 u^{(3)} +
  \ldots
\end{equation}
Substituting into
\begin{equation}
  u^3(u_{\theta\theta\theta} + u_\theta - \sin\theta) + \omega u = \eps \omega,
\end{equation}
we then match orders of $\eps$.  At order $\eps^{0}$,
\begin{equation}
  (u^{(0)})^3 ( \partial_\theta^n u^{(0)} + \partial_\theta
  u^{(0)} - \sin\theta) + \omega  u^{(0)} = 0.
\end{equation}
As we have no way to solve this, we shall assume the order
$\bigo(\eps^{0})$ term is trivial, $u^{(0)}=0$; consequently, $u$ is
$\bigo(\eps^{1})$. Proceeding with this assumption, the next few
orders are:
\begin{subequations}
  \begin{align}
    \bigo\paren{\eps^{1}}:&\quad \omega u^{(1)} = \omega,\\
    \bigo\paren{\eps^{2}}:& \quad \omega u^{(2)} = 0,\\
    \bigo\paren{\eps^{3}}:& \quad -(u^{(1)})^3 \sin\theta + \omega
    u^{(3)}=0.
  \end{align}
\end{subequations}
The leading order approximation of the steady state is thus
\begin{equation}
  \label{e:approx_steady}
  u \approx \frac{q}{\omega} + \frac{1}{\omega}\paren{\frac{q}{\omega}}^3
  \sin\theta. 
\end{equation}
This corresponds to the ``strongly supercritical regime'' discussed by
Benilov {\it et al.},\cite{Ashmore:2003p8741, Benilov:2008p4148}.  A
similar analysis applies to the case $n=2$, for which an approximate
steady state solution is
\begin{equation}
  \label{e:approx_steadyn2}
  u \approx \frac{q}{\omega} + \frac{1}{\omega}\paren{\frac{q}{\omega}}^2
  \sin\theta. 
\end{equation}
This regime was studied in \cite{Pukhnachov:2005p5033}, where the
expansion was justified by a contraction mapping argument.  This
motivates the numerical implementation of this approach as a tool for
computing such steady states.

\subsection{Description of the Algorithm}

In what follows, we focus on the case $n = 3$, although it can be
easily adapted to other cases.  We seek to rewrite \eqref{eq:steadyq}
in the form
\[
u = L^{-1} f(u) + g(\theta)
\]
where $L$ is a linear operator, $f$ contains the nonlinear terms and
$g$ is a a driving term.  This will motivate the iteration scheme
\[
u^{(j+1)} = L^{-1} f(u^{(j)}) + g(\theta)
\]
To begin, we divide \eqref{eq:steadyq} by $u^3$, to obtain
\[
(\partial_{\theta}^3 + \partial_\theta) u = \sin\theta + q u^{-3} -
\omega u^{-2}.
\]
As $\partial_{\theta}^3 + \partial_\theta$ has a non trivial kernel,
the lefthand side cannot be inverted without careful
projection. Pukhnachov resolved this problem by adding and subtracting
appropriate terms to the equation.  First, we introduce the variable
$v$,
\[
v \equiv u - \frac{q}{\omega}.
\]
Then
\begin{equation}
  \label{e:v_steadystate}
  \begin{split}
    L v \equiv (\partial_{\theta}^3
    + \partial_\theta+\frac{\omega^4}{q^3}) v &= \sin\theta -
    \frac{\omega v}{(v
      + \frac{q}{\omega})^3} + \frac{\omega^4}{q^3} v\\
    & = \sin\theta + \frac{\omega^5}{q^3}v^2 \frac{3 q^2+ 3 q\omega v+
      \omega^2 v^2}{(q + v\omega)^3}\equiv \sin\theta+ F(v)
  \end{split}
\end{equation}
The right hand side of \eqref{e:v_steadystate} contains a driving
term, $\sin\theta$, and an essentially nonlinear term of order
$\bigo(v^2)$.  Rearranging \eqref{e:v_steadystate},
\begin{equation}
  v = \frac{q^3}{\omega^4}\sin\theta + L^{-1}{F}( v)
  \label{eq:v_ft}
\end{equation}
We now have our iteration scheme
\begin{equation}
  \label{e:iter}
  v^{(j+1)} = \frac{q^3}{\omega^4}\sin\theta + L^{-1}{F}( v^{(j)}).
\end{equation}
In Fourier space, this is
\begin{equation}
  \ft{v}^{(j+1)}_k =\frac{q^3}{\omega^4} \frac{\delta_{k,1} -
    \delta_{k,-1}}{2i}+
  \frac{\ft{F}_k^{(j)}}{-ik^3+ik+\frac{\omega^4}{q^3}}, \quad F^{(j)} =
  F(v^{(j)})
  \label{eq:pvviter}
\end{equation}
Plots of solutions computed using this algorithm appear in Figures
\ref{f:plots} and \ref{f:converge}.
% \begin{equation}
%   \label{eq:scaling}
%   \xi \equiv \frac{\beta}{\delta}u - 1,
% \end{equation}
% $\xi$ solves
% \begin{equation}
%   \xi(\theta) = \beta\sin(\theta) + \bigo(\beta^5)
%   \label{eq:aprx}
% \end{equation}
% motivating our use of this as a starting guess.  While this is
% effective, our results show that other guesses also provide us with
% convergent iterations.

\begin{figure}[h]
  \centering \subfigure[Solutions with fixed $\omega =
  0.1$]{\includegraphics[width=2.45in]{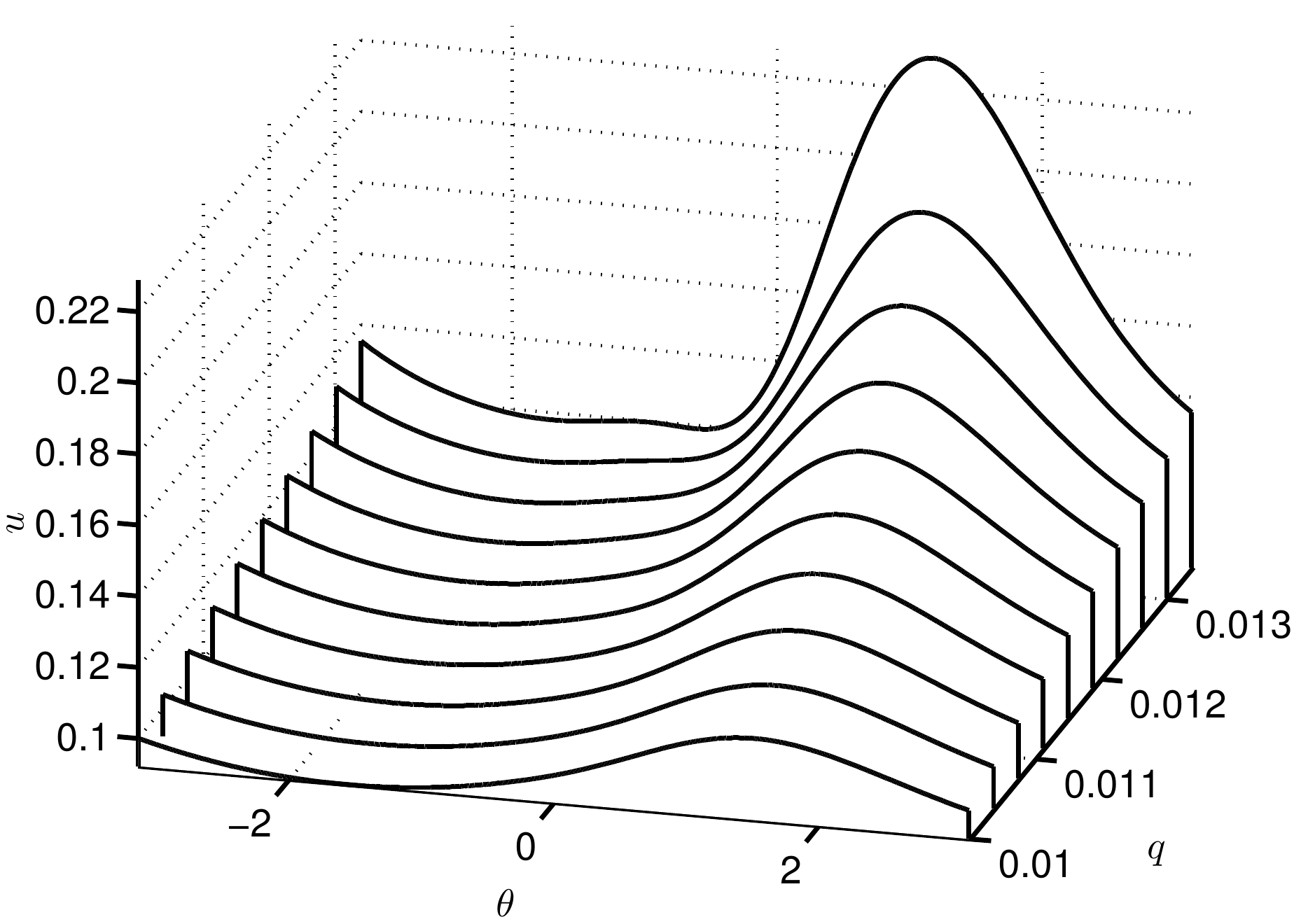}}
  \subfigure[Solutions with fixed $q =
  0.01$]{\includegraphics[width=2.45in]{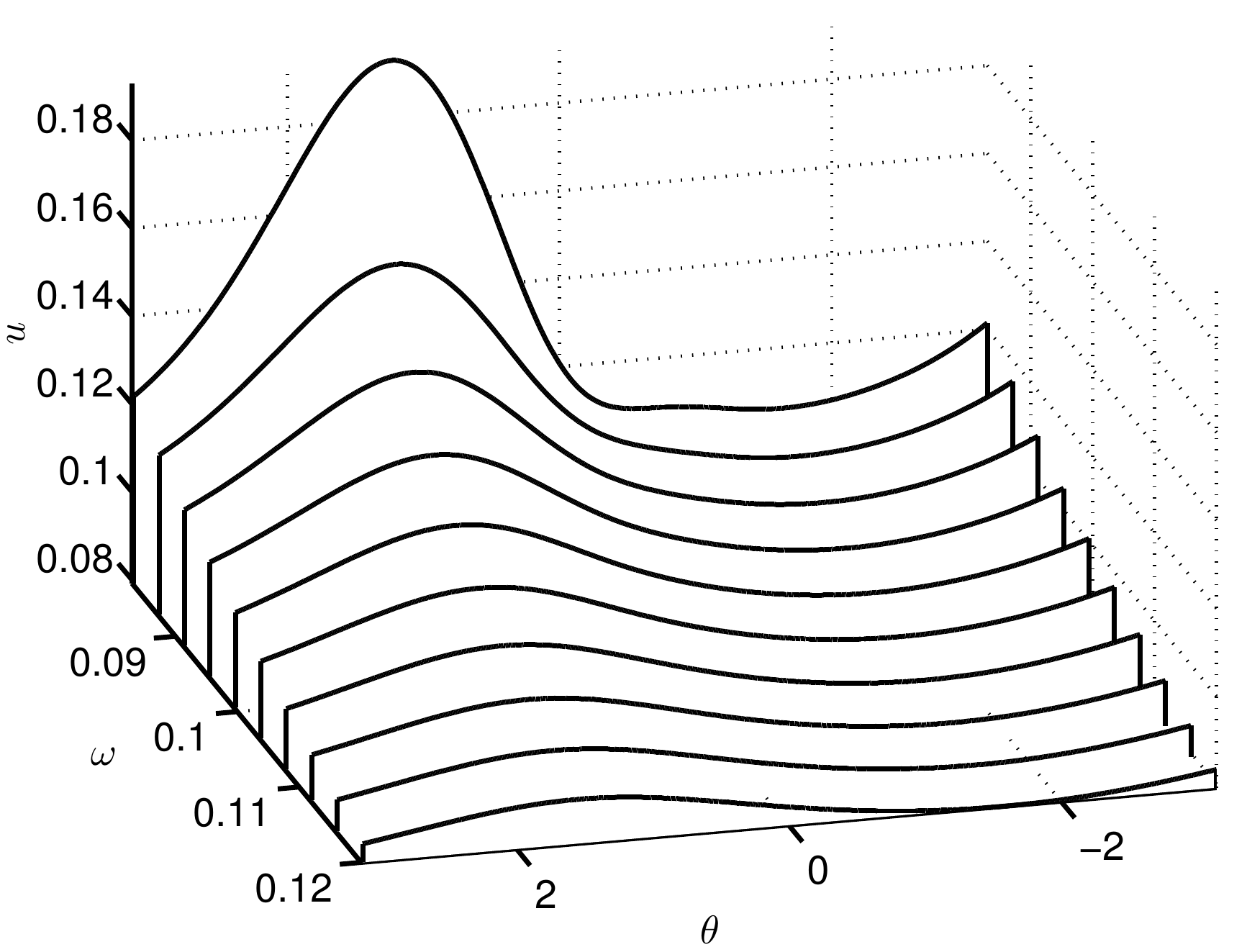}}
  \caption{Steady state solutions with various $\omega$ and $q$
    values.  Computed by the iterated spectral algorithm at 512 grid
    points to a tolerance of $10^{-15}$.}
  \label{f:plots}
\end{figure}

\begin{figure}[h]
  \begin{center}
    \includegraphics[width=2.47in]{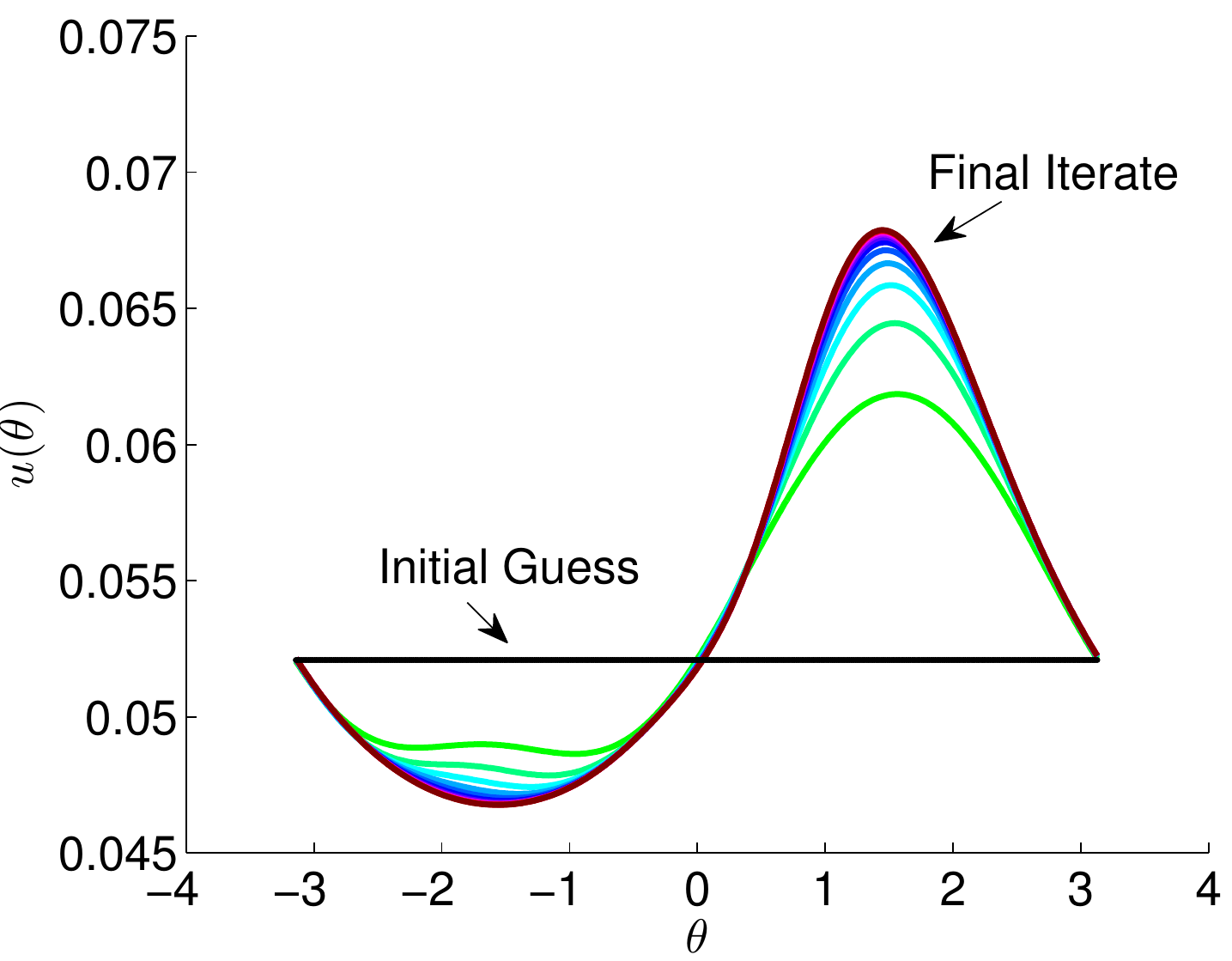}
  \end{center}
  \caption{Convergence to a steady state, with $v^{(0)} = 0$, $\omega
    = 0.0192$ and $q = 0.01$.}
  \label{f:converge}
\end{figure}

\subsection{Performance of the Spectral Iterative Method}

When our algorithm successfully converges, it does so quite rapidly,
as shown in Figure \ref{f:iterate_errors}, where we plot the error
between iterates $v^{(j)}$ and the final iterate. We see that the
error is proportional to $e^{-\alpha N}$, although it is clear that
the rate, $\alpha$, varies with $q$ and $\omega$.  Indeed, as shown in
Figure \ref{f:diverge}, there are choices of these parameters for
which our algoirthm diverges; this is consistent with the threshold
\eqref{e:nonexist}.  While this will be further discussed in Section
\ref{s:non_exist}, we belive this is closely related to the transition
between the ``small amplitude regime'', and an essentially nonlinear
regime.

\begin{figure}[h]
  \includegraphics[width=3in]{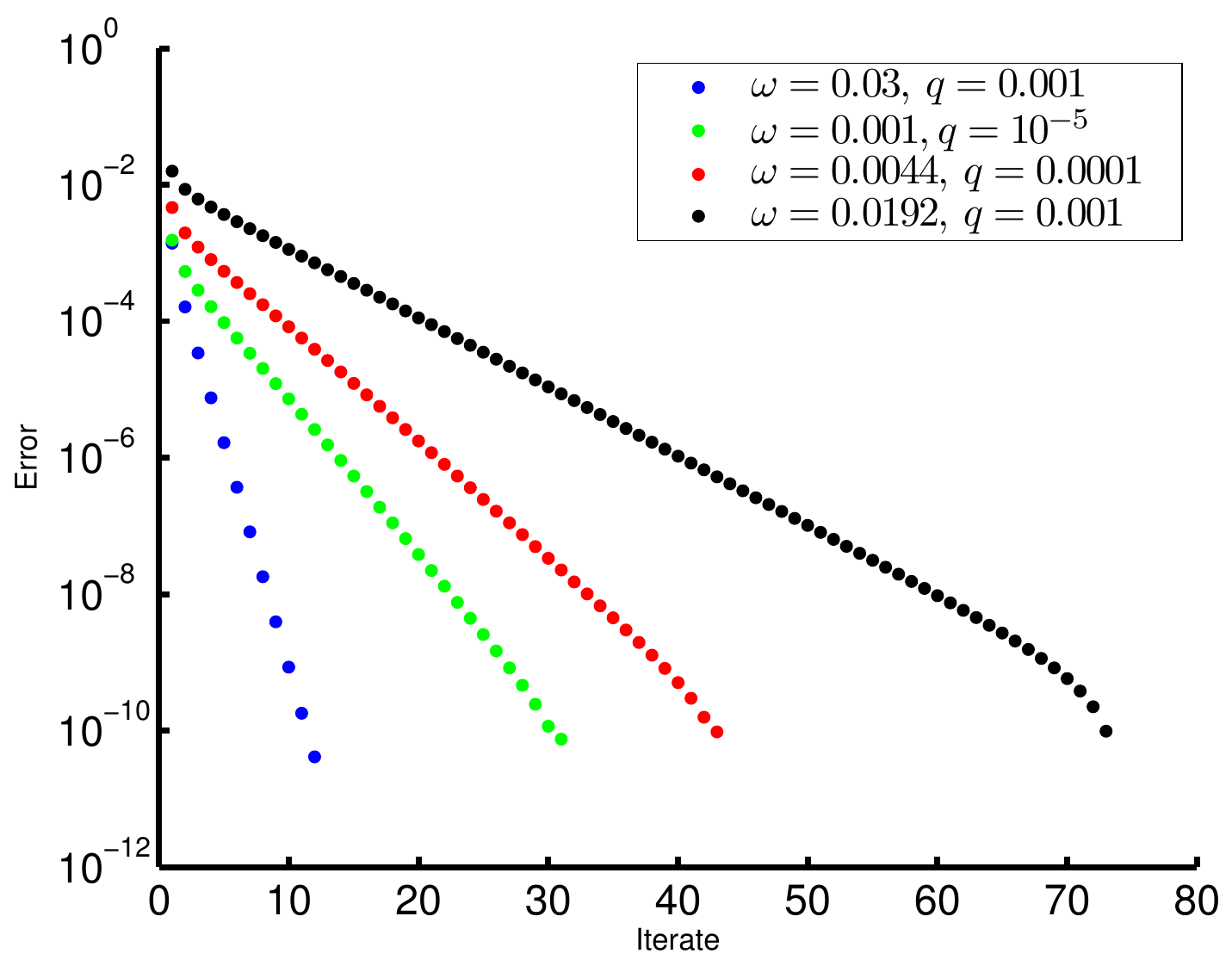}
  \caption{$L^{\infty}$ error between approximate solutions and the
    final iterate, at 512 grid points, computed to a tolerance of
    $10^{-10}$.}
  \label{f:iterate_errors}
\end{figure}

\begin{figure}[h]
  \begin{center}
    \includegraphics[width=2.47in]{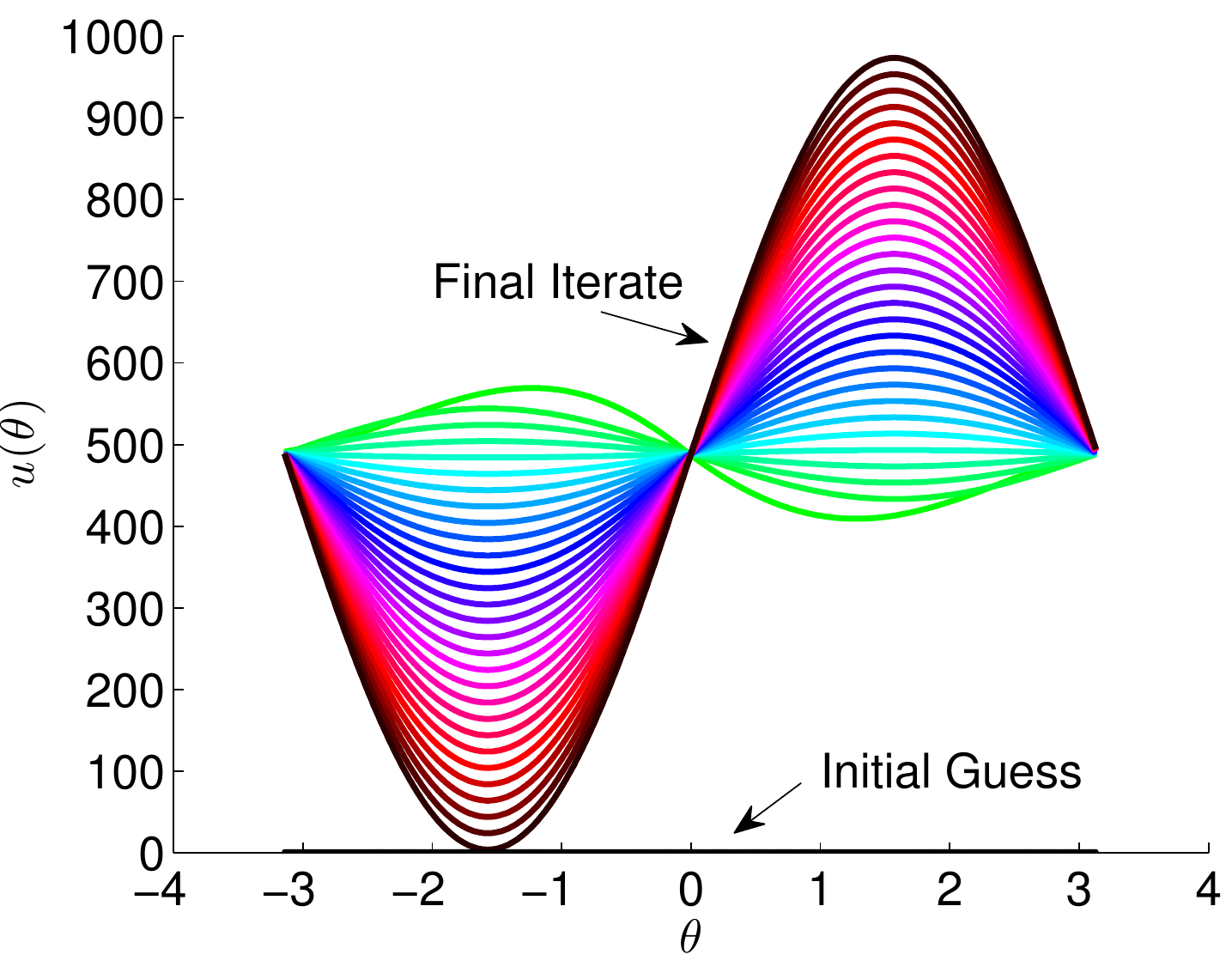}
  \end{center}
  \caption{A typical divergent sequence of iterates, caused by the
    selection of $\omega = 0.1$ and $q=0.1$.  The iterative spectral
    method was run to only 30 iterations -- after 50 iterations, the
    solution had a maximum of $4\times 10^5$.}
  \label{f:diverge}
\end{figure}

However, when we do observe convergence, there is a very high degree
of precision, even using a modest number of grid points. Figure
\ref{f:errors}, shows how quickly the maximum and minimum of the
numerical solution converge to one with $10^5$ grid points as the
number of grid points increases.  Again, solutions with values of $q$
and $\omega$ closer to \eqref{e:nonexist} converge less rapidly; the
solution in black is much closer to this threshold than the other
three.

\begin{figure}[h]
  \centering \subfigure[The error in the maximum of the solution.]
  {\includegraphics[width=2.45in]{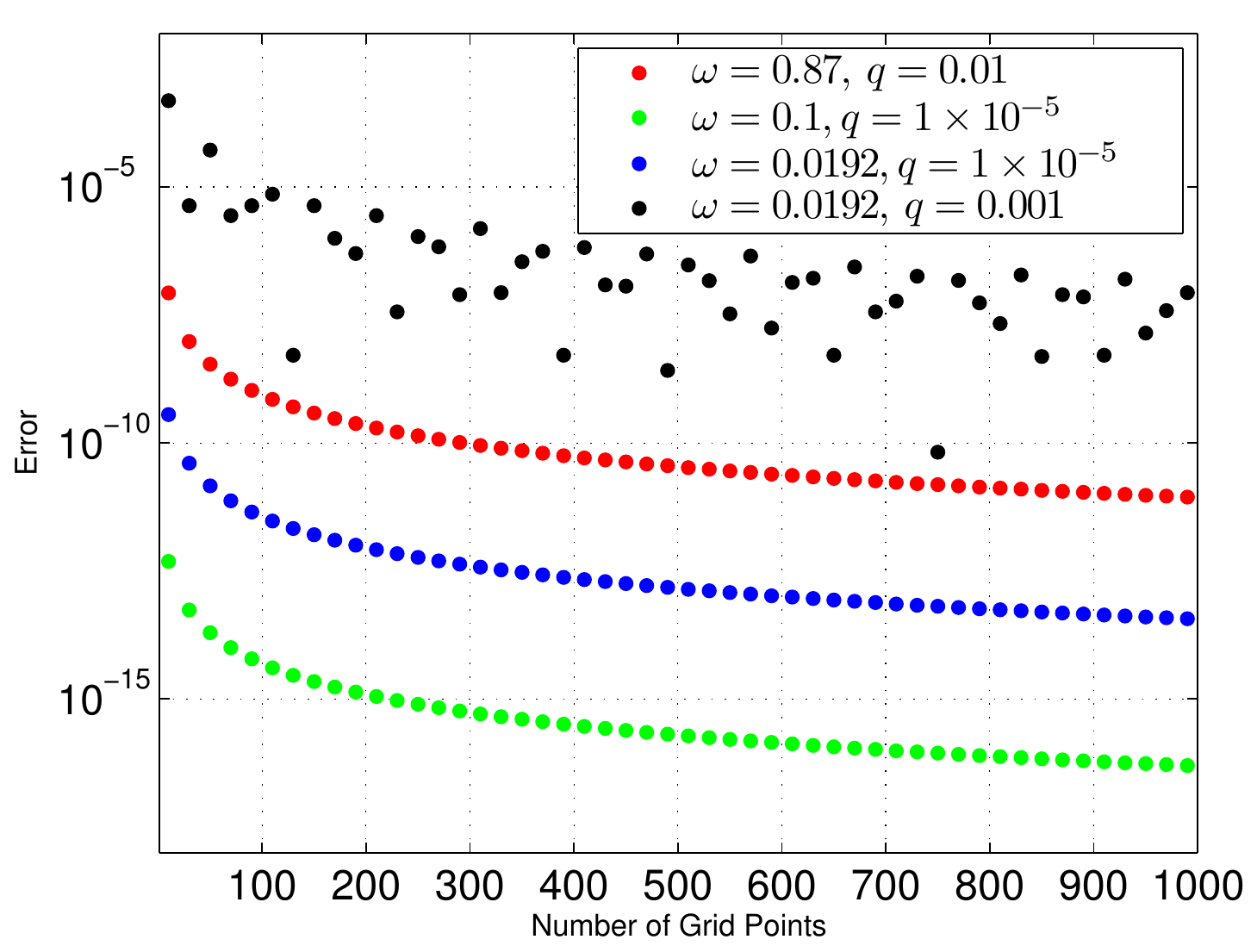}} \subfigure[The
  error in the minimum of the solution.]
  {\includegraphics[width=2.45in]{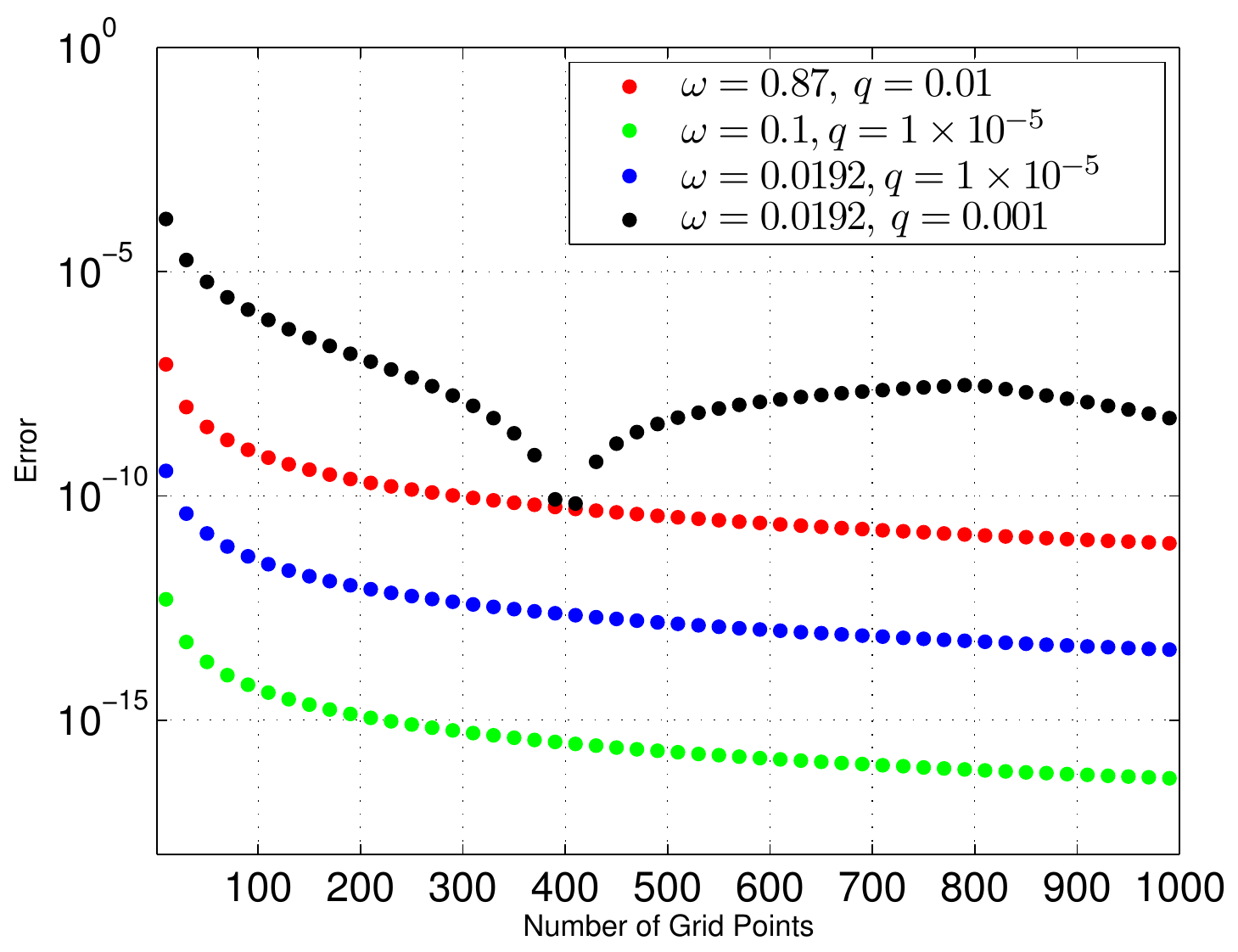}}
  \caption{Convergence of the iterative spectral method, as measured
    by an error in the minimum and maximum, as the grid size is
    increased.}
  \label{f:errors}
\end{figure}

\subsection{Relation to Other Numerical Methods}

Steady state solutions of \eqref{eq:tfe} have been computed in several
ways.  Some groups, such as Ashmore, Hosoi, \& Stone,
\cite{Ashmore:2003p8741}, have run the time dependent equation to
steady state.  More recently, Benilov, Benilov, \& Kopteva computed
the solutions directly by a finite difference discretization with a
Newton algorithm, \cite{Benilov:2008p4148}.

An advantage of our approach is that it converges quickly and is
extremely easy to program. An implementation of the algorithm in
\matlab is available at \url{http://hdl.handle.net/1807/30015}.

Our approach bears some resemblance to Petviashvilli's algorithm and
the related spectral renormalization procedure, which have become
quite popular within the nonlinear dispersive wave community for
computing solitons,
\cite{Ablowitz:2005p9,Pelinovsky:2005p14,Petviashvili:1976p6194}.  For
such equations, including Korteweg - de Vries and nonlinear
Schr\"odinger/Gross-Pitaevskii, solitary wave solutions solve
semilinear elliptic equations of the form
\begin{equation}
  -\nabla^2 Q + \lambda Q = f(Q),\quad \lambda >0, \quad Q:\R^d \to \R.
  \label{e:ndwe}
\end{equation}
For power nonlinearities, $f(Q) = Q^p$, Petviashvilli's algorithm
finds a solution by seeking a fixed point of an iteratively
renormalized nonlinear equation in the Fourier domain,
\begin{equation}
  \widehat{Q}^{(j+1)} = \Lambda_{j} \frac{\widehat{f}(Q^{(j)})}{\lambda +
    \abs{{\boldsymbol{k}}}^2}, \quad \Lambda_{j} = \frac{\int (\lambda +
    \abs{{\boldsymbol{k}}}^2) \abs{\widehat{Q}^{(j)}}^2 d{\bf k}  }{\int
    \widehat{f}(Q ^{(j)}) \overline{\widehat{Q}}^{(j)} d{\bf k}    }
  \label{e:semilinear_petvia}
\end{equation}
$Q^{(j)}$ is the $j$-th approximation, and $\Lambda_j$ is a
renormalization constant at this iterate.

Our scheme, \eqref{eq:pvviter} lacked any such renormalization
constant, $\Lambda_j$.  This raises the question of whether or not
such a constant could improve our approach.  Our results show this is
not the case.  Introducing the constant
\begin{equation}
  \Lambda_j \equiv \frac{ \sum
    \left[-ik^3+ik+\tfrac{\omega^4}{q^3}\right]\abs{\ft{v}_k^{(j)}}^2}{ \sum \ft{F}_k^{(j)}\overline{\ft{v}}_k^{(j)}},
  \label{eq:norm}
\end{equation}
we can alter our iteration scheme to be
\begin{equation}
  \ft{v}_{k}^{(j+1)} =\Lambda_j^\gamma\paren{\frac{q^3}{\omega^4}
    \frac{\delta_{k,1} - 
      \delta_{k,-1}}{2i}+
    \frac{\ft{F}_k^{(j)} }{-ik^3+ik+\tfrac{\omega^4}{q^3}}}.
  \label{eq:iter}
\end{equation}
where $\gamma$ is a constant which we may alter to stabilize or
accelerate the scheme.

An outcome of our experiments is that not only does our method
converge with $\gamma =0$, there does not appear to be appreciable
benefit with any other value; see Table \ref{t:conv_results}.  For
this reason, we do not employ $\Lambda_j$, and only deem our algorithm
an iterative spectral method.

\begin{table}[h!]

  \caption{Number of iterations required for
    $\norm{u_{n+1} - u_n}\leq 10^{-10}$i for a variety of initial
    guesses $u_0$.
    Computed on $2^{12}$ grid points with different values of
    $\gamma$.  $q = 0.001153$ and $\omega = 0.2$ in all
    cases.}
  \label{t:conv_results}

  \begin{tabular}{ c c | c  c  c  c  c  c  c }
    & & \multicolumn{7}{|c}{$\gamma$} \\
    & & -0.75 & -0.50 & -0.25 & 0 & 0.25 & 0.50 & 0.75\\	  
    \hline 
    \multirow{3}{*}{$u_0$} & $ 2\frac{q}{\omega}$ & 42 & 23 & 16 & 12 & 20 & 31 & 69 \\
    & $\frac{q}{\omega} + \frac{q^3}{\omega^4} \cos(\theta)$ & 2 & 2 & 2 & 2 & 2 & 2 & 2\\ 
    & $\frac{q}{\omega} + \frac{q^3}{\omega^4}{\rm sech}(\theta)$ & 38 & 18 & 10 & 3 & 9 & 17 & 39 
  \end{tabular}
\end{table}

\subsection{Mass Constraints and Non-Uniqueness}
\label{s:nonunique}
Given values of $q$ and $\omega$, our algorithm successfully converges
to non-trivial solutions.  However, for a given value of $q$ and
$\omega$, there may be more than one solution of \eqref{eq:steadyq}.
To study this phenomenon, it is helpful to introduce the {\it mass} of
a solution
\begin{equation}
  M \equiv \int_{-\pi}^\pi u(\theta) d\theta.
\end{equation}
% The mass $M$ is conserved for solutions of \eqref{eq:tfe}.
Benilov, Benilov \& Kopteva, \cite{Benilov:2008p4148} tried fixing
$\omega$, and numerically explored the relationship between the flux
and the mass as they varied.  They found non-uniqueness of the
solutions, in the sense that for a certain range of $M$, there were
multiple solutions with different values of $q$.  Alternatively, for a
certain range of $q$, there were multiple solutions with different
values of $M$.  See Figure 14 of their work for a phase diagram, and
see our Figure \ref{f:triple_diagram} for a similar diagram.

This non-uniqueness affects our algorithm.  For a given $(\omega, q)$,
the solution that the method converges to may not be the solution we
desire, as the following example shows.  Using, AUTO, \cite{auto07},
if we seek a solution with $M=1$ and $\omega = .09$, we will obtain
the solution pictured in Figure \ref{f:multi_pt} (a).  This solution
has flux parameter $q = 0.0114039$.

Suppose we now fix $\omega = .09$, and continue the solution by
lowering the mass.  As seen Figure \ref{f:multi_pt} (b), there will be
a second solution, with the same $q$, but slightly smaller mass.  For
$q =0.01140392 $, the second solution is at $M = 0.912782$ Thus, for a
given $(\omega, q)$, there may be multiple solutions, each with a
different mass-- which solution does the iterative spectral algorithm
converge to?  As Figure \ref{f:multi_pt} (a) shows, it converges to
the solution with the smaller mass.  The agreement between the AUTO
solution and our method's solution is quite good; after the AUTO
solution is splined onto the regular grid of the spectral solution,
the pointwise error is $\bigo(10^{-7})$.

% We discuss the significance of this agreement below, in Section
% \ref{s:discussion}.

\begin{figure}
  \subfigure[Two different solutions with $\omega = .09$ and
  $q=0.0114039$]{\includegraphics[width=2.45in]{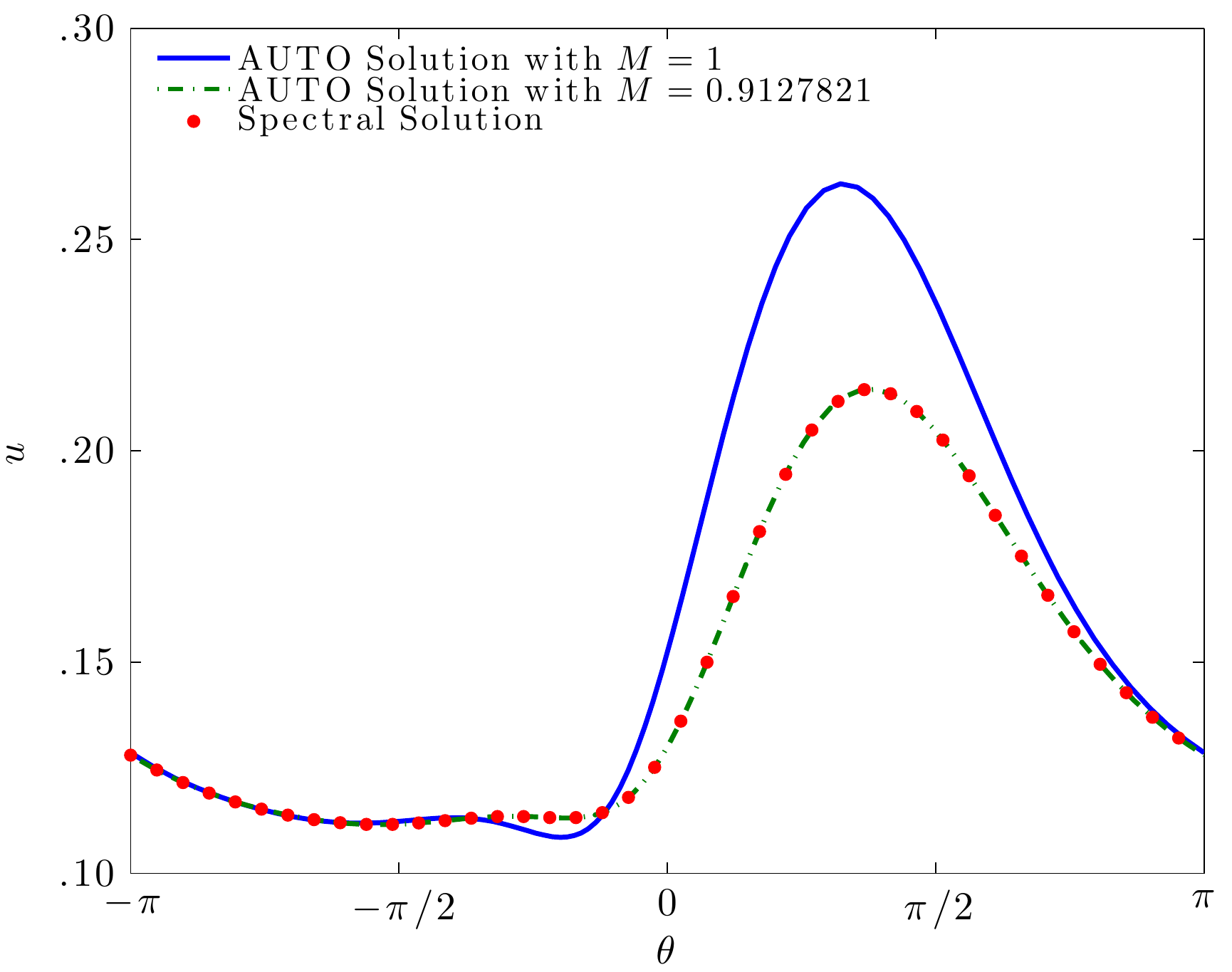}}
  \subfigure[$(M, q)$ pairs of solutions with $\omega =
  .09$]{\includegraphics[width=2.45in]{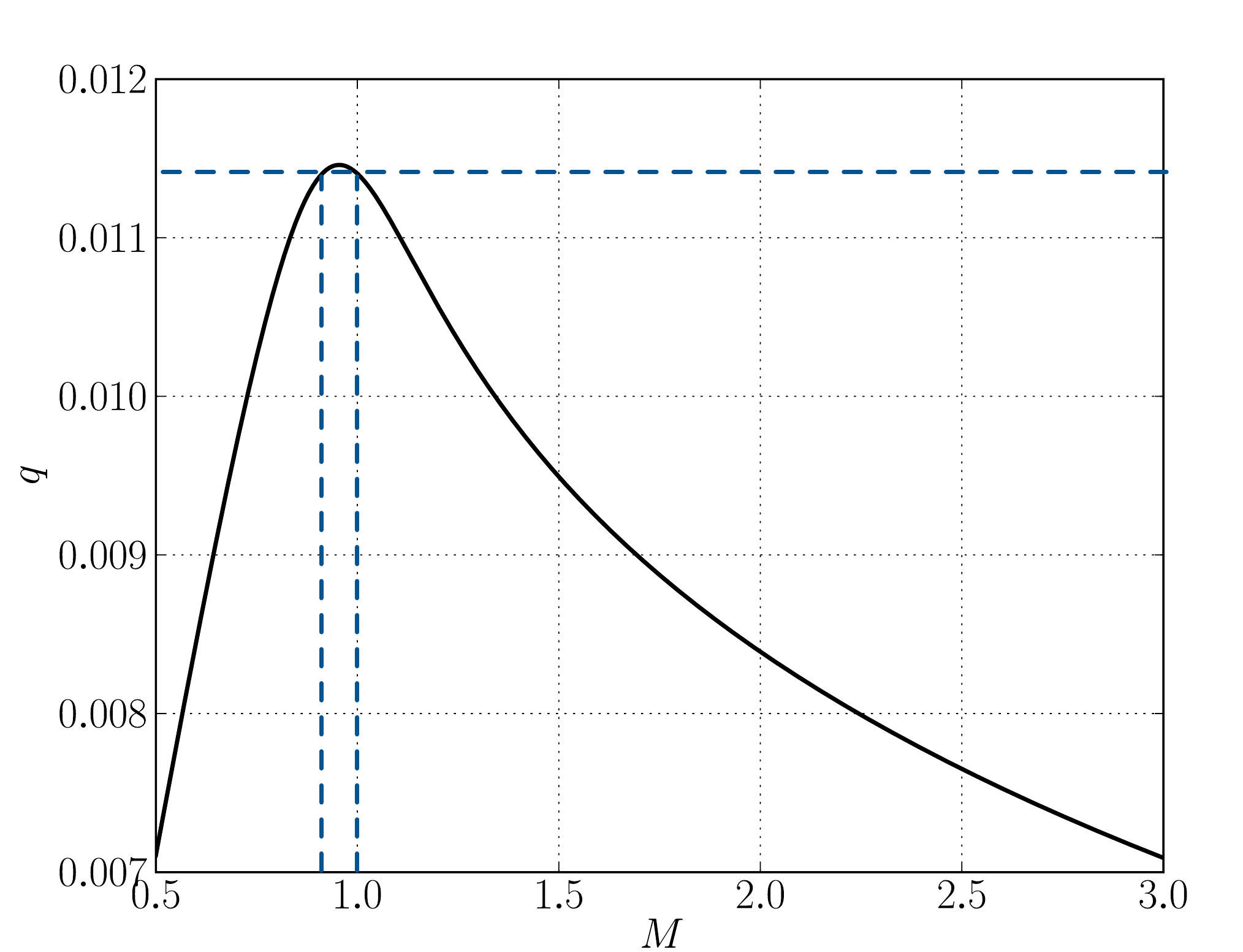}}
  \caption{AUTO finds two solutions by continuation methods the same
    $\omega$ and $q$, but with different mass.  The Spectral Iterative
    Method converges to the solution with the smaller mass.}
  \label{f:multi_pt}
\end{figure}

The inability of our algorithm to recover the $M=1$ solution can be
traced to the development of a linear instability in the iteration
scheme.  Let $v^{(j)} = v + p^{(j)}$, where $v$ is a solution, and
$p^{(j)}$ is a perturbation at iterate $j$.  If we linearize
\eqref{e:iter} about $v$, we have
\begin{equation}
  \label{e:linearized_iter}
  \begin{split}
    p^{(j+1)} &= L^{-1} \bracket{ \frac{\omega^5}{q^3}\frac{6 q^3 + 6
        q^2 \omega v + 4 q \omega^2 v^2 + \omega^3 v^3}{(q + \omega
        v)^4}
      v p^{(j)}}\\
    & =L^{-1} \paren{F'(v) p^{(j)}} \equiv \tilde{L}_{v} p^{(j)}.
  \end{split}
\end{equation}
If the spectrum of $\tilde{L}_v$ lies strictly inside the unit circle,
the perturbation will vanish, and the solution $v$ will be stable with
respect to the iteration.  However, if there are points in the
spectrum of this operator which lie outside the unit circle, the
solution will be unstable to the iteration, and we would expect our
algorithm to diverge.

Though we shall not attempt to prove properties of the spectrum of
$\tilde{L}$, we can discretize our problem and compute eigenvalues of
the induced matrix.  Computing the eigenvalues of a discrete
approximation of $\tilde{L}_v$ at the two AUTO solutions, we find that
the matrix corresponding to the $M=1$ solution does, in fact, have an
eigenvalue outside the unit circle, $\lambda_{\rm unstable} =
1.017059$; see Figure \ref{f:linspec_diagram}.  The discretized
operator is defined as
\begin{equation} {\bf \tilde{L}}_{\bf v} \paren{D^{3} + D +
    \tfrac{\omega^4}{q^3} I }^{-1} \diag\set{F'({\bf v})}
\end{equation}
with the differentiation matrix $D$, a dense Toeplitz matrix for band
limited interpolants, as in \cite{trefethen2000spectral}.  The value
of the unstable eigenvalue converges quite quickly as we increase the
number of grid points, as shown in Table
\ref{e:eigenvalue_convergence}.

\begin{figure}
  \includegraphics[width=3in]{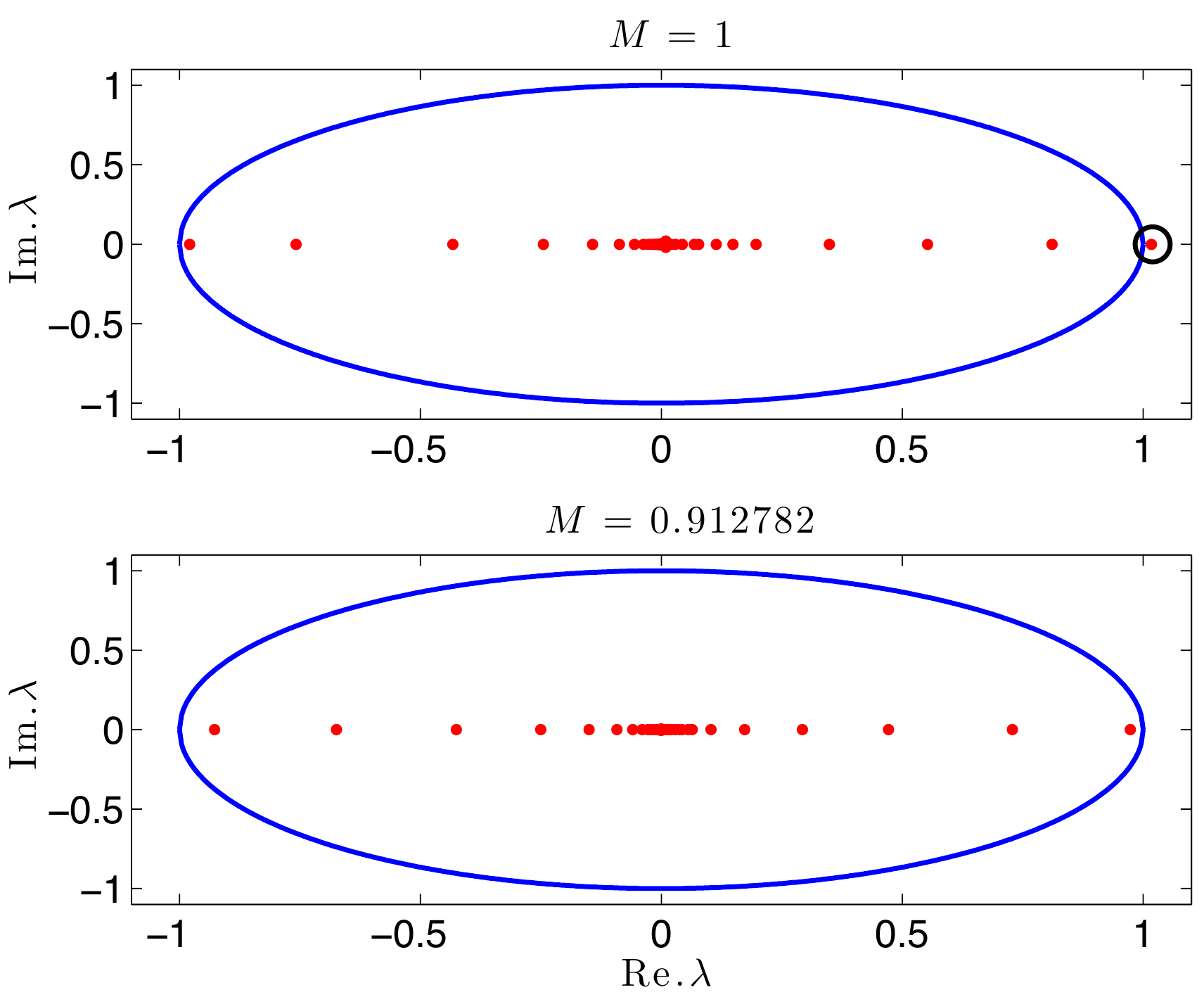}
  \caption{The eigenvalues of the discretized $\tilde{L}_v$ for the
    two solutions computed using AUTO with $\omega =.09$ and $q =
    .0114039$.  Note that for the $M=1$ solution, there is an
    eigenvalue outside the unit circle at $\lambda =
    1.017059$. Computed with 2048 grid points.}
  \label{f:linspec_diagram}
\end{figure}

\begin{table}
  \caption{Values of the unstable eigenvalue of ${\bf \tilde{L}}_{\bf
      v}$ for the $\omega =.09$, $q =
    .0114039$ and $M=1$ solution as a function of the number of grid
    points.}
  \label{e:eigenvalue_convergence}
  \begin{tabular}{r | l}
    No. of Grid Points & $\lambda_{\rm unstable}$\\
    \hline
    16 &1.01711308844\\
    64 & 1.01705921834\\
    256 &1.01705919446 \\
    2048 &1.01705919124 
  \end{tabular}
\end{table}

The $(M,q)$ phase space of solutions with fixed $\omega$ can be more
tortuous than that shown in Figure \ref{f:multi_pt} (b).  Indeed, as
we send $\omega \to 0$, a singular limit, there may not just be
multiple solutions of {\it different mass and the same flux}, but also
multiple solutions of {\it different flux and the same mass}.  Such an
example appears in Figure \ref{f:triple_diagram}.  These cases are
more extensively explored in \cite{Benilov:2008p4148} and in Badali
{\it et al}, \cite{Badali:2011p14435}.

\begin{figure}
  \includegraphics[width=3in]{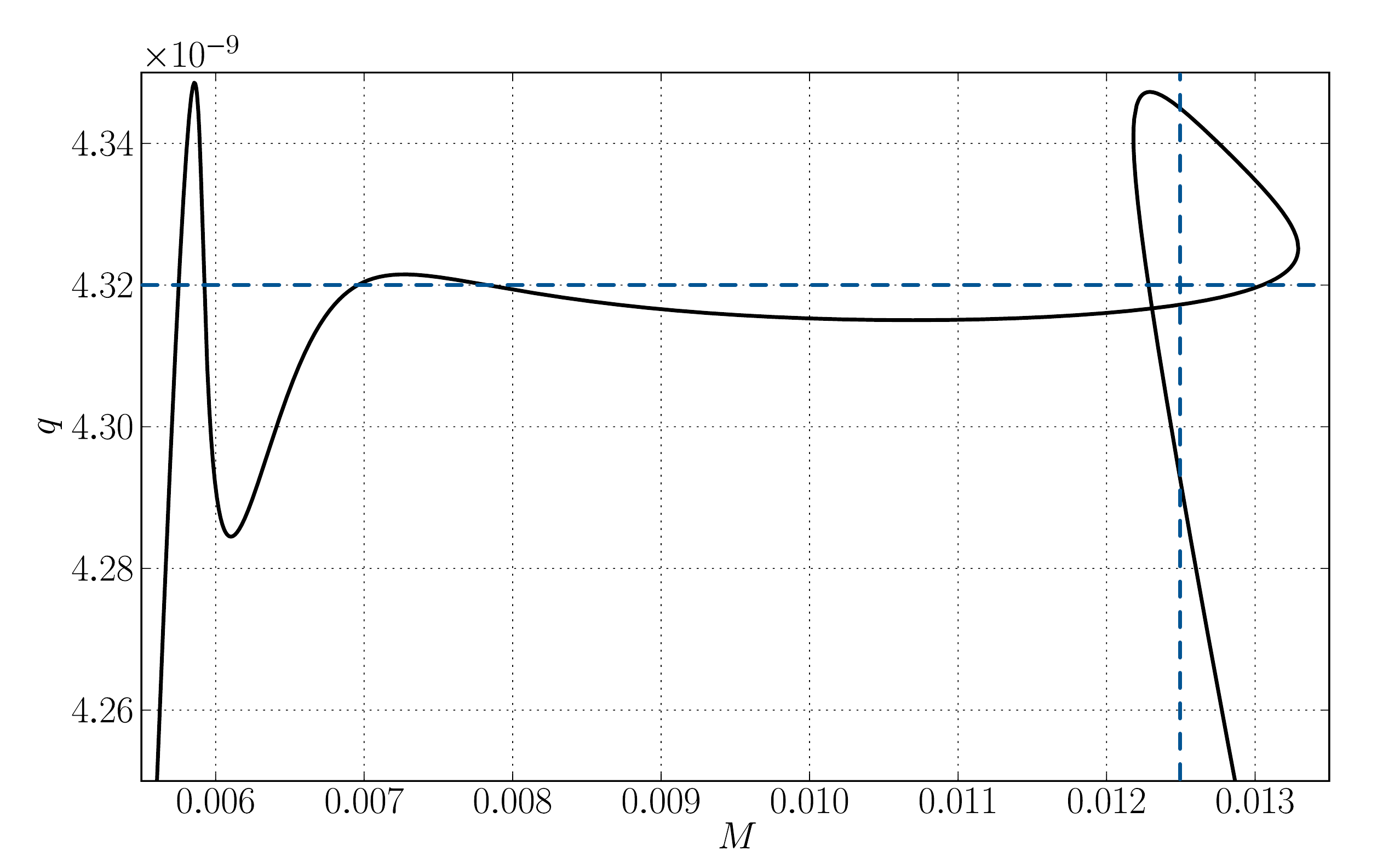}
  \caption{The $(M,q)$ phase space of solutions with rotation
    parameter $\omega = 5\times 10^{-6}$.  Note that there can be as
    many as six solutions of different mass and the same flux, and as
    many as three solutions of different flux and the same mass.
    Computed using AUTO.}
  \label{f:triple_diagram}
\end{figure}

\section{Non-Existence Results}
\label{s:non_exist}

As mentioned in the introduction, in \cite{Chugunova:2010p6065}, the
authors prove that solutions to \eqref{eq:steadyq} do not exist for
all $(\omega, q)$ pairs.  Indeed, they show there are no solutions
when \eqref{e:nonexist} holds.  Interestingly, this relation is stated
entirely in terms of $q$ and $\omega$, and makes no consideration of
the mass, $M$.

We now explore, numerically, \eqref{e:nonexist}. For fixed mass, we
compute the associated solution at a variety of $\omega$ values,
determining $q$.  The results are appear in Figure \ref{f:nonexist}.
As $(\omega, q) $ cannot uniquely determine $u$, we rely on AUTO and
its continuation algorithms, rather than our own iterative scheme.

\begin{figure}
  \centering
  \includegraphics[width=5in]{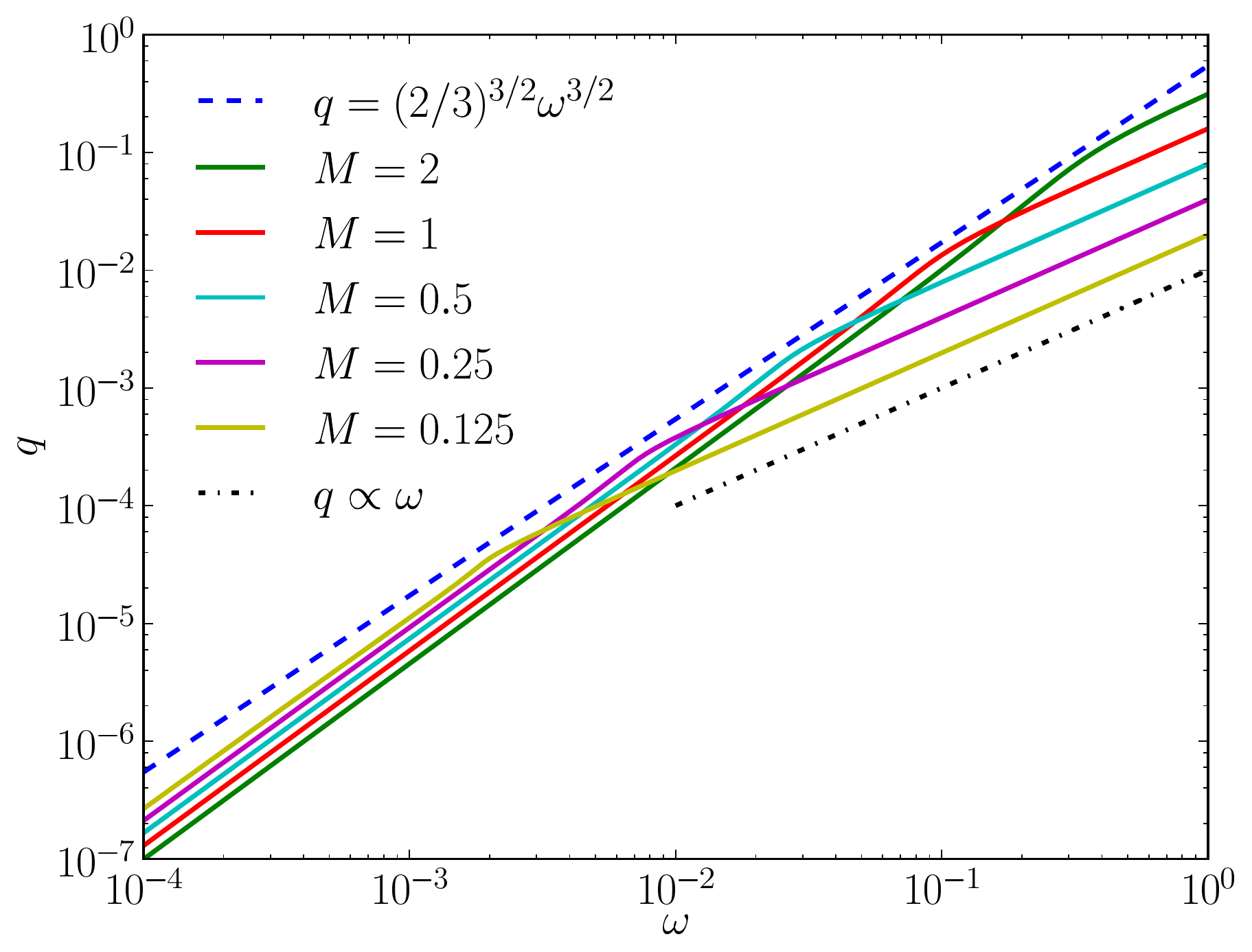}
  \caption{The $(\omega, q)$ phase space of solutions to
    \eqref{eq:steadyq} at prescribed mass values.  As the figure
    shows, even two parameters are insufficient to uniquely determine
    a steady state solution; where the curves of constant mass cross,
    there will be two or more solutions with the same $\omega$ and $q$
    values.  All curves of solutions are constrained by
    \eqref{e:nonexist}, as expected.  Also note that at fixed $q$, for
    sufficiently large $\omega$, the relationship is nearly linear.
    Computed using AUTO.}
  \label{f:nonexist}
\end{figure}

Examining the figure, we remark on the two features found in each
fixed mass curve. Far to the right of the non-existence line, the
$\omega$-$q$ relationship appears to be linear.  Indeed, for moderate
mass values, the relationship appears to be of the form
\eqref{e:puk_regime}.  It is in this ``small amplitude'' regime that
our iterative spectral algorithm succeeds.

As $\omega$ continues to decrease, the scaling changes to accomodate
\eqref{e:nonexist}, and appears to follow a curve like $ q \propto
\omega^{\beta}$, for a value of $\beta>3/2$.  For all masses, the
deviation from \eqref{e:nonexist} is less than an order of magnitude.
Thus, there is not an obvious small parameter from which to formulate
a series expansion.  We also remark that there are many intersections
between the curves of constant mass; the lack of uniqueness of a
solution for a given $(\omega, q)$ is quite common.  As $\omega$ and
$q$ approach this essentially nonlinear regime, our algorithm becomes
limited.  As discussed in the preceding, section unstable eigenvalues
can appear, precluding convergence to solutions with a desired mass,
and the rate of convergence slows.

Turning to Figure \ref{f:thick}, we see the tendency of the solutions
to form singularities, vertical asymptotes, as $\omega$ tends towards
zero for fixed $q$.  These were computed using our spectral iterative
algorithm; the asymptotes occurred where the algorithm failed to
converge in a reasonable number of iterations. As these figures show,
it is possible to get quite close to the theoretical threshold of
\eqref{e:nonexist}.  This suggests that the constraint may be sharp,
and this warrants further investigation.  Even if the constant is not
correct, the scaling appears to be sharp, and the inequality plays an
important role in separating two physical regimes.

\begin{figure}[h]
  \centering \subfigure[$q = 1\times 10^{-3}$. The mass of the final
  solution was
  0.3951]{\includegraphics[width=2.4in]{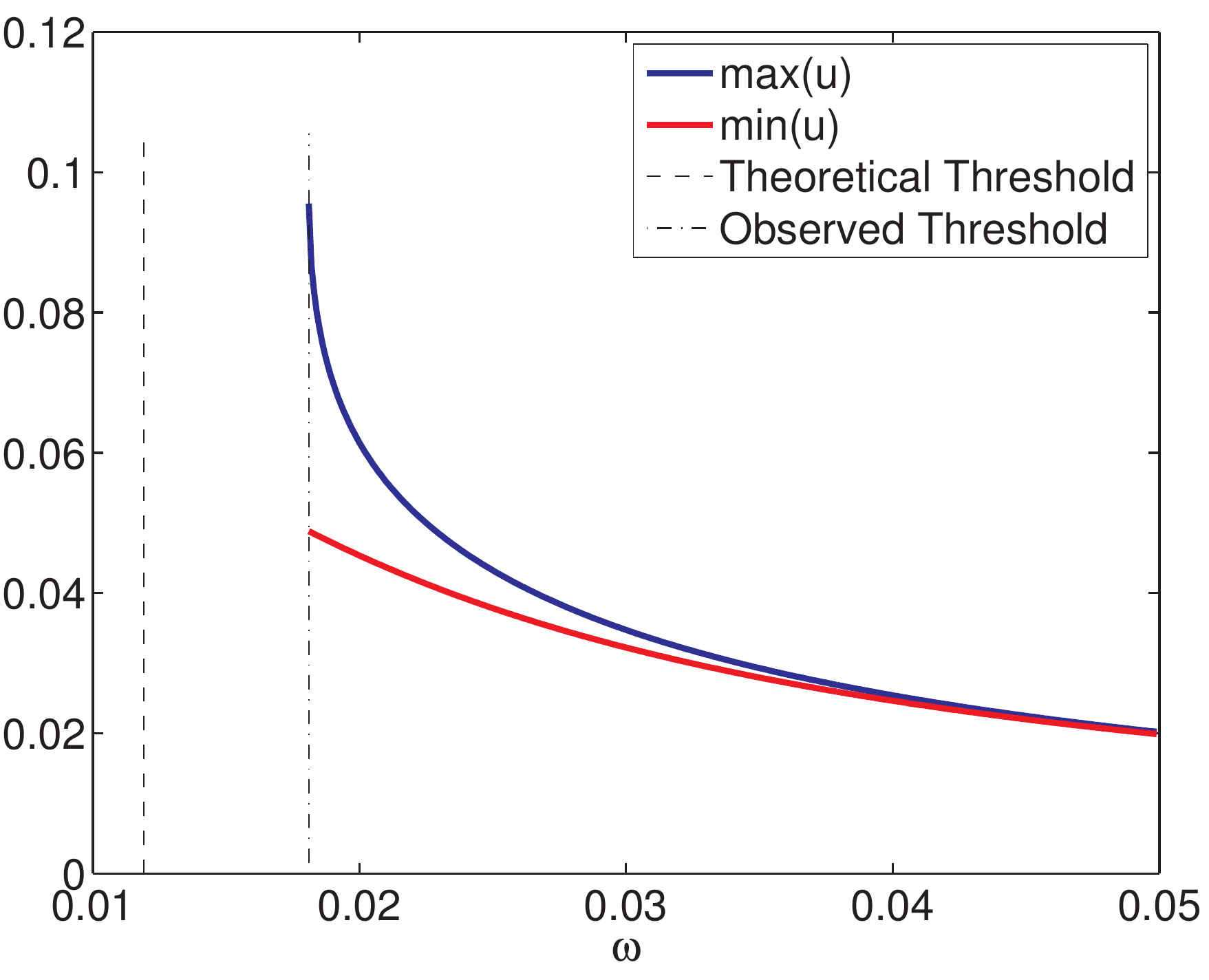}}
  \subfigure[$q = 1\times 10^{-4}$.  The mass of the final solution
  was 0.1612]{\includegraphics[width=2.4in]{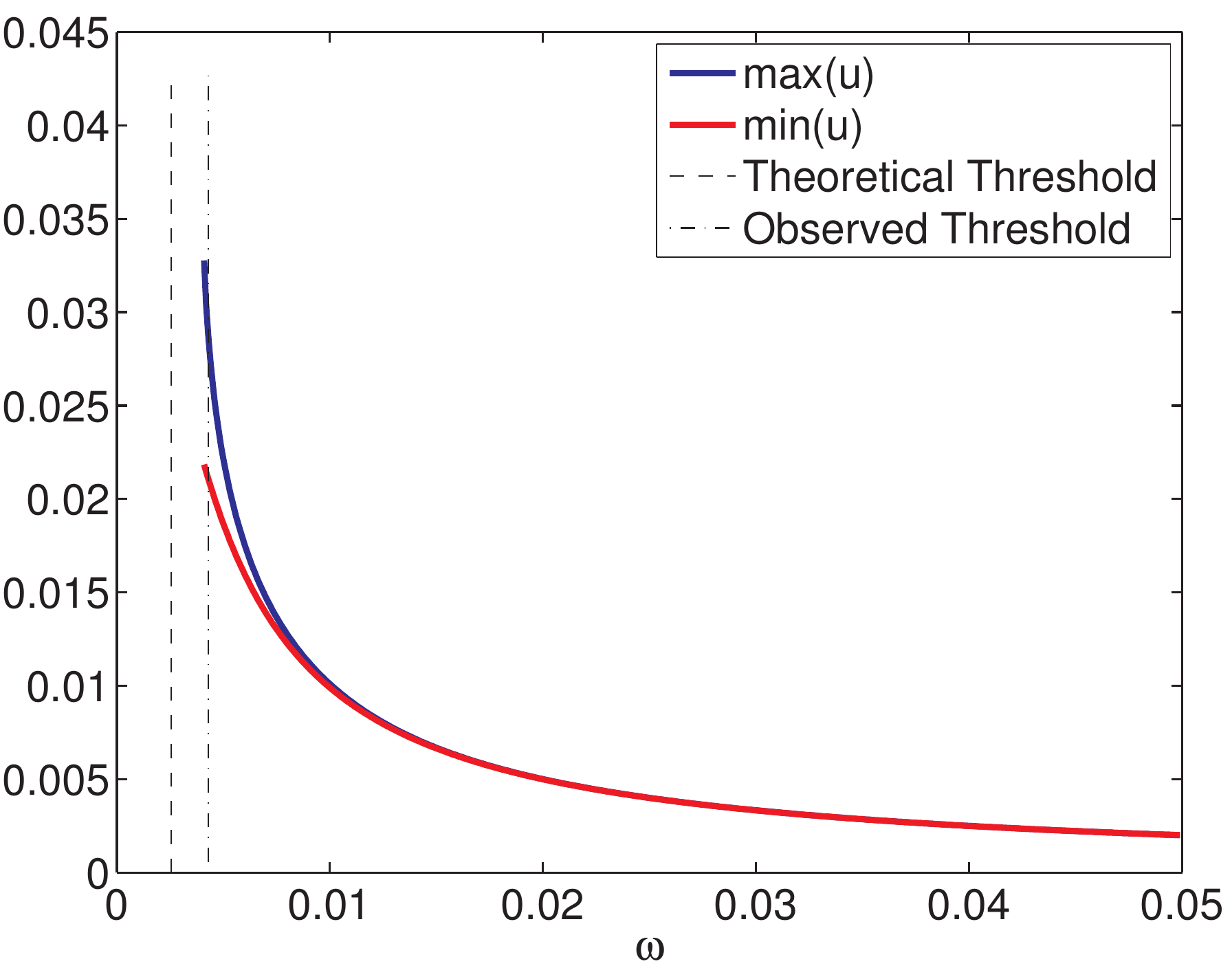}}
	
  \caption{Thickness of solutions with a grid size of $2^{10}$ as
    $\omega$ is varied.  The dashed lines denote the observed
    threshold at which our spectral iterative algorithm fails and the
    theoretical threshold, \eqref{e:nonexist} beyond which no
    solutions should exist. Computed using the spectral iterative
    method.}
  \label{f:thick}
\end{figure}

\section{Discussion}
\label{s:discussion}

In this work, we have given a simple proof of the positivity of steady
state solutions of a thin film equation with rotation, subject to
modest assumptions, provided $n\geq 2$.  It may be possible to employ
additional regularity properties of the steady state solution to lower
this threshold; we only relied on Sobolev embeddings in our result.

We have also formulated a very easy to implement algorithm for
computing the steady state solutions for a given $(\omega, q$) pair.
However, in cases where multiple solutions are known to exist, an
instability can develop about one of the solutions, precluding
convergence to this solution.  We conjecture that the preference is
related to the proximity to the non-existence curve, where the
parameters have left the small amplitude regime.  Finding a way to
stabilize the spectral algorithm in this regime remains an open
problem.

Finally, our computations suggest further exploration of the
non-existence curve is warranted.  Figures \ref{f:nonexist} and
\ref{f:thick} suggest that the nonexistence relationship may be sharp.
Furthermore, the phase diagram indicates that this inequality
separates two physical regimes.  In the first, where $\omega$ and $q$
are nearly linearly related, the solution is nearly constant, and can
be approximated by \eqref{e:approx_steady}.  In order to accomodate
the nonexistence constraint, the phase diagram bends, and we enter a
regime where the parameters are nonlinearly related, and the steady
state solutions cease to be well described by unimodal bumps.

{\bf Acknowledgements:} {The authors wish to thank A. Buchard and
  M. Chugunova for suggesting this problem and for many helpful
  discussions throughout this project's development.  This work was
  supported in part by NSERC Discovery Grant \# 311685-10 and an NSERC
  Undergraduate Student Research Award.}

\bibliographystyle{abbrv} \bibliography{film}

\end{document}